\numberwithin{equation}{section}
\newtheorem{lem}{Lemma}[section]
\newtheorem{prop}{Proposition}[section]
\newtheorem{defn}{Definition}[section]
\newtheorem{conj}{Conjecture}[section]
\newtheorem{rem}{Remark}[section]
\DeclareMathOperator{\C}{\mathbb{C}}
\DeclareMathOperator{\R}{\mathbb{R}}
\DeclareMathOperator{\Q}{\mathbb{Q}}
\DeclareMathOperator{\Z}{\mathbb{Z}}
\DeclareMathOperator{\N}{\mathbb{N}}
\DeclareMathOperator{\F}{\mathbb{F}}
\DeclareMathOperator{\FF}{\mathcal{F}}
\DeclareMathOperator{\Av}{Av}
\DeclareMathOperator{\sgn}{sgn}
\DeclareMathOperator{\rank}{rank}
\DeclareMathOperator{\e}{e}
\newcommand{\Mod}[1]{\ (\text{mod}\ #1)}
\title{Computing the average root number of an elliptic surface}
\author{Jake Chinis\vspace{-1.5\baselineskip}}
\begin{document}

\maketitle

\begin{abstract}
By considering a one-parameter family of elliptic curves defined over $\Q$, we might ask ourselves if there is any bias in the distribution (or parity) of the root numbers at each specialization. From the work of Helfgott, we know (at least conjecturally) that the average root number of an elliptic curve defined over $\Q(T)$ is zero as soon as there is a place of multiplicative reduction over $\Q(T)$ other than $-\deg$. Recently, Helfgott's work was extended by Desjardins, where she relaxes some of Helfgott's hypotheses and is able to provide unconditional results on the variation of the root number for many elliptic surfaces. 

In this paper, we are concerned with elliptic curves defined over $\Q(T)$ with no place of multiplicative reduction over $\Q(T)$, except possibly at $-\deg$. More precisely, we will use the work of Helfgott to compute the average root number of an explicit family of elliptic curves defined over $\Q$ and show that this family is "parity-biased" infinitely-often. 
\end{abstract}

\section{Introduction}

Let $E$ be an elliptic curve defined over $\Q$. For every prime $p,$ let $\tilde{E}_p$ denote the reduction of $E$ modulo $p$ and set $a_p:=p+1-\#\tilde{E}_p(\F_p)$, where $\#\tilde{E}_p(\F_p)$ denotes the number of $\F_p$-points on $\tilde{E}_p$. The $L$-series associated to $E$ is defined by the Euler product
\begin{align*}
    L(s,E):=\prod_{\substack{{p \text{ prime}}\\{p|\Delta}}}(1-a_pp^{-s})^{-1} \prod_{\substack{{p \text{ prime}}\\{p\nmid\Delta}}}(1-a_pp^{-s}+p^{1-2s})^{-1},
\end{align*}
where $\Delta$ is the discriminant of $E$. It is well known that the product defining $L(s,E)$ converges and gives rise to an analytic function, provided $\Re(s)>\frac{3}{2}.$ The Modularity Theorem \cite{Wiles} tells us that much more is true; namely,
\begin{align*}
    \Lambda(s,E):=N_E^{\frac{s}{2}}(2\pi)^{-s}\Gamma(s)L(s,E),
\end{align*}
has an analytic continuation to the entire complex plane and satisfies the functional equation
\begin{align*}
 \Lambda(s,E)=w\Lambda(2-s,E),   
\end{align*}
for some $w=w_E=\pm 1$, where $N_E=N_{E/\Q}$ is the conductor of $E$ and where $\Gamma(s):=\int_{0}^\infty t^{s-1}\e^{-t}dt$ is the Gamma function. We call $w$ the \textit{root number of $E$}. 

In this paper, we use the techniques developed by Rizzo \cite{Riz} and generalized by Helfgott \cite{Hel09} to compute the average root number of an explicit family of elliptic curves defined over $\Q$. By a family of elliptic curves defined over $\Q$, we mean an elliptic curve defined over $\Q(T)$; equivalently, it is a one-parameter family of elliptic curves given by a Weierstrass equation of the form
\begin{align*}
    \FF: y^2=x^3+a_2(T)x^2+a_4(T)x+a_6(T),
\end{align*}
for some $a_2(T),a_4(T),a_6(T)\in \Z[T]$. For every $t\in\Z$, we let $\FF(t)$ denote the specialization of $\FF$ at $t$ and note that $\FF(t)$ defines an elliptic curve for all but finitely-many $t$. Moreover, the map which sends $\FF$ to $\FF(t)$ is injective for all but finitely-many $t$ (Silverman's Specialization Theorem, \cite{Spec}). From here, we let
\begin{align*}
    \varepsilon_{\FF}(t):=
      \left\{\def\arraystretch{1.2}%
\begin{array}{@{}l@{\quad}l@{}}
\mbox{the root number of $\FF(t)$} & \mbox{if $\FF(t)$ is an elliptic curve,}\\
0 & \mbox{otherwise,}
\end{array}\right.
\end{align*}
and define the \textit{average root number of $\FF$ over $\Z$} by
\begin{align*}
    \Av_{\Z}(\varepsilon_{\FF}):=\lim_{T\rightarrow \infty}\frac{1}{2T} \sum_{|t|\leq T}\varepsilon_{\FF}(t),
\end{align*}
provided the limit exists.

In \cite{Hel09}, Helfgott showed (conditionally, and unconditionally in some cases) that $\Av_{\Z}(\varepsilon_{\FF})=0$ whenever $\FF$ has a place of multiplicative reduction over $\Q(T)$ other than $-\deg$. In order to make the statement precise, we first state the following conjectures:

\begin{conj}[Chowla's Conjecture]
Let $P$ be a squarefree polynomial with integer coefficients. Then,
\begin{align*}
    \lim_{N\rightarrow \infty} \frac{1}{N}\sum_{n\leq N} \lambda(P(n))=0,
\end{align*}
where $\lambda(n):=\prod_{p|n}(-1)^{\nu_p(n)}$ is Liouville's function and where $\nu_p(n)$ denotes the $p$-adic valuation of $n$.
\end{conj}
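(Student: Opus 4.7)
The first thing to flag is that Chowla's Conjecture in the generality stated is a central open problem in analytic number theory; any honest ``proof proposal'' must be understood as a sketch of the classical strategy together with a frank statement of where it breaks down. With that caveat, the plan is to attempt a Vinogradov-type bilinear decomposition of $\sum_{n\leq N}\lambda(P(n))$ and see how far one can push it.

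First I would pass from $\lambda$ to $\mu$ via the identity $\lambda(m)=\sum_{d^2\mid m}\mu(m/d^2)$. Applied to $m=P(n)$, squarefreeness of $P$ together with a Hensel-type count for the number of $n\leq N$ with $d^2\mid P(n)$ (which is $O(N/d^2)+O(\rho(d^2))$, where $\rho(k)$ is the number of roots of $P$ modulo $k$) lets me exchange $\lambda(P(n))$ for $\mu(P(n))$ with controllable error. Next I would try to extract cancellation in $\sum_{n\leq N}\mu(P(n))$ by splitting into Type I and Type II sums using a Heath-Brown or Vaughan-type identity for $\mu$. The Type I sums reduce to counting $\{n\leq N:d\mid P(n)\}$, and the Chinese Remainder Theorem plus the classical estimate for $\rho(d)$ on average (from Nagell, or more sharply from Landau--Selberg for cusp-form analogues) gives a main term of size $N\rho(d)/d$ with acceptable error for $d$ in the admissible range.

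The serious obstacle lies in the Type II sums $\sum_{m\sim M}\sum_{k\sim K}a_m b_k \mathbf{1}_{mk=P(n),\,n\leq N}$, which demand control over the distribution of factorizations of the values $P(n)$ in two variables simultaneously. For $\deg P=1$ this collapses to the Prime Number Theorem in arithmetic progressions and the original theorem $\sum_{n\leq N}\lambda(n)=o(N)$ is recovered. For $\deg P\geq 2$, no unconditional bound of the required strength is known; indeed, this is precisely the content of Chowla's Conjecture for polynomial arguments and is believed to be of the same depth as the prime $k$-tuples conjecture.

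Realistically, then, the deliverables one could expect in place of an unconditional proof are: (i) the logarithmically-averaged form $\sum_{n\leq N}\lambda(P(n))/n=o(\log N)$, which follows from Tao's theorem on the logarithmic Chowla Conjecture at least for $\deg P=1$ and some partial extensions; (ii) conditional deductions under GRH or a sufficiently uniform form of the Hardy--Littlewood conjecture; or (iii) cancellation on average over parametric families of $P$. The main obstacle, to reiterate, is the Type II bilinear estimate for polynomial values of degree at least two; circumventing it is tantamount to the conjecture itself, which is why it is stated here as a hypothesis and invoked as needed in the body of the paper.
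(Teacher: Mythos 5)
The statement you were asked to prove is labelled as a conjecture in the paper, and the paper neither proves it nor claims to: Chowla's Conjecture is invoked purely as a hypothesis in Helfgott's Proposition~\ref{ppb}, and only in the branch where $\FF$ has a place of multiplicative reduction other than $-\deg$. The family $\FF_s$ actually studied has no such place, so the paper's main theorem never uses Chowla's Conjecture at all; the only input of this type is the Squarefree Sieve Conjecture for $B(x)=x^2-s$, which is known unconditionally for polynomials whose irreducible factors have degree at most $3$. You correctly recognize that the statement is an open problem and decline to claim a proof, which is the right call; there is no proof in the paper to compare against. Your sketch of the classical strategy is a fair account of the state of the art: the reduction from $\lambda$ to $\mu$ via $\lambda(m)=\sum_{d^2\mid m}\mu(m/d^2)$ and the Type I estimates are fine, the degree-one case is indeed equivalent to the Prime Number Theorem (in arithmetic progressions, for $P(n)=an+b$), and the obstruction you isolate --- the Type II bilinear sums over factorizations of $P(n)$ for $\deg P\geq 2$ --- is the genuine one. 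Two small imprecisions: Tao's logarithmically-averaged result concerns two-point correlations $\lambda(n)\lambda(n+h)$, i.e.\ products of two linear forms, not general $P$; for a single linear form the ordinary (non-logarithmic) average already follows from the Prime Number Theorem, while for an irreducible $P$ of degree at least $2$ even the logarithmic version remains open. None of this affects the paper, whose results are unconditional for the reason above.
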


\begin{rem}
By "Strong Chowla's Conjecture" for a polynomial $P$, we mean that Chowla's Conjecture holds for $P(ax+b)$ for all $a,b\in \Z, a\neq 0$.
\end{rem}

\begin{conj}[Squarefree Sieve Conjecture]
\label{SFSC}
Let $P$ be a squarefree polynomial with integer coefficients. Then,
\begin{align*}
    \lim_{N\rightarrow \infty}\frac{1}{N}\#\{1\leq n \leq N: \exists \text{ prime $p>\sqrt {N}$ s.t. $p^2|P(n)$}\}=0.
\end{align*}
\end{conj}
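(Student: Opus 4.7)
The plan is to bound the counting function
\[ B(N) := \#\{1 \leq n \leq N : \exists \text{ prime } p > \sqrt{N} \text{ with } p^2 \mid P(n)\} \]
by splitting the primes $p$ into a ``medium'' range and a ``large'' range, each handled by a different technique.

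For medium primes $\sqrt{N} < p \leq M$, where $M$ is a parameter to be chosen (say $M = N/\log^2 N$), I would sum $p$ first. Provided $p \nmid \operatorname{disc}(P)$, Hensel's lemma ensures that the congruence $P(x) \equiv 0 \pmod{p^2}$ has at most $\deg P$ solutions; since $p^2 > N$, at most one integer $n \in [1,N]$ lies in each such residue class, yielding
\[ \#\{n \leq N : p^2 \mid P(n)\} \leq \deg(P). \]
Summing over primes in $(\sqrt{N}, M]$ gives a total bounded by $\deg(P) \cdot \pi(M) = o(N)$, plus a uniformly bounded contribution from the finitely many primes dividing $\operatorname{disc}(P)$.

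For large primes $M < p \leq C_P N^{(\deg P)/2}$, the previous bound becomes useless because the count of such primes already exceeds $N$ whenever $\deg P \geq 3$. Here I would invert the order of counting: for each fixed $n$, at most $\lfloor \deg(P)/2 \rfloor$ primes $p > \sqrt{N}$ can satisfy $p^2 \mid P(n)$, since $|P(n)| \leq C_P N^{\deg P}$. Writing $P(n) = p^2 m$, the solutions correspond to integral points on the affine surface $V : P(x) = y^2 z$ subject to $|x| \leq N$, $y$ prime with $y > M$, and $z$ in a corresponding short range. The task reduces to bounding the number of such lattice points by $o(N)$, for which I would apply the determinant method of Bombieri--Pila, or a Selberg-sieve argument tailored to $V$. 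For $\deg P \leq 3$ this line of attack succeeds, reproducing Hooley's classical theorem on squarefree values of polynomials.

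The main obstacle is the intermediate regime of large primes once $\deg P \geq 4$. There the geometry of $V$ becomes genuinely richer, and the determinantal estimates one obtains are no sharper than the trivial $O(N)$; equivalently, one cannot rule out a positive density of $n$ for which $P(n)$ harbours a squared prime factor of size comparable to $N^k$ for some $1 \leq k \leq \deg(P)/2 - 1$. To close this gap one must either invoke the ABC conjecture (as in Granville's conditional proof) or exploit fine algebraic features of $P$, such as reducibility or special automorphisms of $V$. This is precisely the reason the statement is phrased as a conjecture rather than a theorem.
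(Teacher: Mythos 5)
There is nothing in the paper to compare against here: the statement is labelled a conjecture, the paper offers no proof of it, and it only records (in a remark) that the conjecture is known when every irreducible factor of $P$ has degree at most $3$, citing Helfgott's squarefree-sieve paper; that known case is all the paper needs, since it is applied with $B(x)=x^2-s$. Your proposal is candid that it does not constitute a proof, and judged as one it has exactly the gap you yourself identify, so the main thing to say is that your concluding paragraph is the correct assessment rather than a defect of exposition.

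On the details of your sketch: the medium-prime range is sound, and you can push it much further than $M=N/\log^2 N$. For $p$ not dividing the discriminant or leading coefficient of $P$, the congruence $P(x)\equiv 0\ (\mathrm{mod}\ p^2)$ has at most $\deg P$ roots, and since $p^2>N$ each root class meets $[1,N]$ at most once, so summing over $p\le M$ costs $\deg(P)\,\pi(M)=o(N)$ for any $M\le N$; the finitely many bad primes are eventually excluded outright by the condition $p>\sqrt{N}$. Thus the only genuinely hard primes satisfy $p>N^{1-o(1)}$, i.e.\ $p^2$ between roughly $N^2$ and $N^{\deg P}$. That range is empty when $\deg P\le 2$ (which gives an elementary unconditional proof of the case the paper actually uses) and is handled by Hooley's argument when $\deg P=3$; for $\deg P\ge 4$ the determinant-method and sieve bounds you invoke do not improve on the trivial $O(N)$ in the intermediate range, no unconditional argument is known, and one must appeal to the $abc$ conjecture (Granville) or to special structure of $P$. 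Two small quibbles: your per-$n$ bound on the number of admissible primes should be $\deg P$ rather than $\lfloor \deg(P)/2\rfloor$ when the primes are only assumed larger than $\sqrt{N}$; and for the reader of this paper it would be worth stating explicitly that the application to $B(x)=x^2-s$ is unconditional precisely because the hard range is vacuous in degree $2$.
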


\begin{prop}[\cite{Hel09}]
\label{ppb}
Let $\FF$ be a family of elliptic curves defined over $\Q$. Let $M_{\FF}(T)$ and $B_{\FF}(T)$ be the polynomials defined by
\begin{align*}
    M_{\FF}(T):=\prod_{\substack{{\nu \text{ mult.}}\\{\nu\neq -\deg}}}Q_{\nu}(T), \qquad
    B_{\FF}(T):=\prod_{\substack{{\nu \text{ quite bad}}\\{\nu\neq -\deg}}}Q_{\nu}(T),
\end{align*}
where the products are over all places $\nu$ of $\Q(T)$ for which $\FF$ has multiplicative reduction over $\Q(T)$ and quite bad \footnote{$\nu$ is a place of quite bad reduction if no quadratic twist of $\FF$ has good reduction at $\nu$.} reduction over $\Q(T)$, respectively, and where $Q_\nu(T)$ is the polynomial associated to $\nu$. Then, for all but finitely-many $t\in \Z$,
\begin{align*}
    \varepsilon_{\FF}(t)=\sgn(g_\infty(t))\lambda(M_{\FF}(t))\prod_{\text{$p$ prime}} g_p(t),
\end{align*}
where $g_\infty$ is a polynomial, $\sgn(g_\infty(t))$ denotes the sign of $g_\infty$ at $t$, and $g_p:\Q_p\rightarrow \{\pm 1\}$ are functions satisfying:
\begin{itemize}
    \item $g_p$ are locally constant outside a finite set of points;
    \item for all but finitely-many primes $p$, $g_p(t)=1$ whenever $\nu_p(B_{\FF}(t))<2$.
\end{itemize}
Moreover, if $\FF$ has at least one place of multiplicative reduction over $\Q(T)$ other than $-\deg$, and if the Squarefree Sieve Conjecture holds for $B_{\FF}(T)$ and Strong Chowla's Conjecture holds for $M_{\FF}(T)$, then $\Av_{\Z}(\varepsilon_{\FF})=0$. On the other hand, if $\FF$ has no place of multiplicative reduction over $\Q(T)$, except possibly at $-\deg$, and if the Squarefree Sieve Conjecture holds for $B_{\FF}(T),$ then
\begin{align*}
    \Av_{\Z}(\varepsilon_{\FF})=\frac{c_-+c_+}{2}\prod_{p \text{ prime}}\int_{\Z_p}g_p(t)dt,
\end{align*}
where $dt$ denotes the usual $p$-adic measure and where $c_{\pm}=\lim_{x\rightarrow \pm \infty}\sgn(g_\infty(x))$.
\end{prop}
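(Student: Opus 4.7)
The plan is to build the place-by-place decomposition of $\varepsilon_{\FF}(t)$ from explicit local root number formulas, and then to handle the two averaging regimes separately. First I would invoke the factorization $\varepsilon_{\FF}(t) = \prod_v w_v(\FF(t))$ of the global root number over all places of $\Q$, a finite product since $w_p(\FF(t)) = +1$ at primes of good reduction. For each finite $p$, Rizzo's tables (for $p=2,3$) and Halberstadt's formulas (for $p \geq 5$) express $w_p(\FF(t))$ in terms of the $p$-adic valuations and residues of the Weierstrass invariants $c_4(t), c_6(t), \Delta(t)$. Since these invariants are polynomials in $t$, away from the zero locus of the bad-reduction polynomials $Q_\nu(T)$ the value $w_p(\FF(t))$ depends only on $t$ modulo a bounded power of $p$, hence is locally constant on $\Q_p$.

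I would then separate the contributions by reduction type at each place $\nu$ of $\Q(T)$. At a place of multiplicative reduction, the local formula picks up a Legendre/Kronecker-type character at each prime $p \mid Q_\nu(t)$; aggregating these across all such primes and places produces precisely $\lambda(M_{\FF}(t))$, with residual locally constant corrections absorbed into the $g_p$. At an additive place that is not quite bad, a quadratic twist over $\Q(T)$ eliminates the bad reduction, so the contribution is locally constant in $t$. At a place of quite bad reduction other than $-\deg$, the contribution is locally constant off the vanishing locus of $Q_\nu$, and for $p$ large enough is trivial whenever $\nu_p(B_{\FF}(t)) < 2$, because in that range the minimal model and hence $w_p$ is determined by a uniformly bounded Tate-algorithm step. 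A global sign adjustment from leading-coefficient behavior yields the factor $\sgn(g_\infty(t))$ for a suitable polynomial $g_\infty$.

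For the averaging step, first suppose $\deg M_{\FF} \geq 1$. The Squarefree Sieve Conjecture for $B_{\FF}$ lets me discard, at cost $o(T)$, the integers $|t| \leq T$ for which some prime $p > \sqrt{T}$ satisfies $p^2 \mid B_{\FF}(t)$. On the remaining set, $\prod_p g_p(t) = \prod_{p \leq \sqrt{T}} g_p(t)$ is determined by $t$ modulo some $N = N(T)$. Splitting $\sum_{|t| \leq T}$ into residue classes $t \equiv r \pmod{N}$ and applying Strong Chowla's Conjecture to $M_{\FF}(Ns+r)$ in each class shows that the weighted average of $\lambda(M_{\FF}(t))$ tends to zero, giving $\Av_{\Z}(\varepsilon_{\FF}) = 0$.

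If instead $M_{\FF}$ is constant, then $\lambda(M_{\FF}(t))$ is a fixed sign and no Chowla cancellation is available. Again the Squarefree Sieve Conjecture truncates $\prod_p g_p(t)$ to $p \leq \sqrt{T}$ up to $o(T)$ error. For each fixed $p$, $g_p$ is locally constant off a finite set, hence Riemann integrable on $\Z_p$ with respect to Haar measure; equidistribution of integers modulo $p^k$ then gives $\frac{1}{2T}\sum_{|t| \leq T} g_p(t) \to \int_{\Z_p} g_p(t)\,dt$, and the infinite product converges because $\int_{\Z_p} g_p = 1$ for all but finitely many $p$. Combined with $\frac{1}{2T}\sum_{|t| \leq T} \sgn(g_\infty(t)) \to (c_- + c_+)/2$, this produces the stated formula. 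The main technical obstacle throughout is the uniform control of the $p$-adic contributions for primes in the intermediate range where $B_{\FF}(t)$ may be highly divisible by $p$, and it is precisely the Squarefree Sieve hypothesis that resolves it.
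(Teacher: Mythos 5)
First, a point of orientation: the paper does not prove this proposition; it is imported verbatim from Helfgott \cite{Hel09}, and the only piece of it the paper actually re-states and uses is the averaging mechanism (Proposition \ref{main_tool}, Helfgott's Proposition 7.7), which your last two paragraphs essentially re-derive. So the question is whether your sketch would reconstruct Helfgott's argument, and there is one genuine gap in it.

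The gap is in the construction of the $g_p$. You propose to read the local root numbers off the Rizzo--Rohrlich tables and declare that the "residual locally constant corrections" can be "absorbed into the $g_p$," but the naive prime-by-prime choice $g_p(t)=w_p(\FF(t))$ (with the $-1$'s at multiplicative primes stripped off into $\lambda(M_{\FF}(t))$) does \emph{not} satisfy the crucial second bullet. At a prime $p$ with $p\,\|\,Q_\nu(t)$ for a quite bad place $\nu$ --- so $\nu_p(B_{\FF}(t))=1$ --- the tables give local root numbers such as $\Big(\frac{-1}{p}\Big)$, $\Big(\frac{-3}{p}\Big)$, or $-\Big(\frac{h(t)}{p}\Big)$ for some polynomial $h$ (Proposition \ref{local_root_p>5} of this paper is a concrete instance), and these are $\neq 1$ for infinitely many $p$; the same happens at the multiplicative primes, where the split/nonsplit symbol sits on top of the $-1$ you want to feed into $\lambda(M_{\FF}(t))$. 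The missing idea is a global redistribution by quadratic reciprocity: one multiplies $w_p$ by a compensating symbol such as $\Big(\frac{-1}{p}\Big)^{\nu_p(B_{\FF}(t))}$, uses the identity $\prod_{p\text{ odd}}\Big(\frac{-1}{p}\Big)^{\nu_p(n)}\equiv |n_2| \Mod{4}$ to push the accumulated correction onto $p=2$, $p=3$, the finitely many primes dividing the relevant resultants, and the place at infinity, and only then obtains functions $g_p$ satisfying both bullets --- with $g_\infty$ collecting not just the leading-coefficient sign but also the reciprocity contribution. This is not a technicality one can wave at: it is exactly the step the present paper must carry out by hand for its family in Lemma \ref{w_p^*} and Remark \ref{mod_loc_rem} (the passage from $w_p$ to $w_p^*$), and without it the decomposition asserted in the proposition is false for the functions you have actually defined. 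The remainder of your outline (squarefree-sieve truncation of the product to small primes, Chowla in arithmetic progressions when a multiplicative place exists, $p$-adic equidistribution and convergence of $\prod_p\int_{\Z_p}g_p$ otherwise) correctly mirrors Helfgott's Proposition 7.7.
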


\begin{rem}
The above theorem is conditional on the Squarefree Sieve Conjecture as well as on Chowla's Conjecture, which are known to hold in some cases; namely, Chowla's Conjecture is known to hold for polynomials of degree 1, whereas the Squarefree Sieve Conjecture is known to hold for polynomials whose irreducible factors have degree less than or equal to 3 \cite{Hel04}. In \cite{Des}, Desjardins improves upon the work of Helfgott by replacing the Squarefree Sieve Conjecture with some technical hypotheses, thus providing unconditional results on the distribution of root numbers for many families of elliptic curves.
\end{rem}

There has been little work dealing with the case where $\FF$ has no place of multiplicative reduction over $\Q(T)$, except possibly at $-\deg$. In \cite{Riz}, Rizzo showed that Washington's family \cite{Was} $\mathcal{W}: y^2=x^3+tx^2-(t+3)x+1$  has $\varepsilon_{\mathcal{W}}(t)=-1$ for all $t\in \Z$ (so that $\Av_{\Z}(\varepsilon_{\mathcal{W}})$ is trivially non-zero) and he also gave an example of a family of elliptic curves whose $j$-invariant is not constant and whose average root number is not equal to $\pm 1, 0$. There are other such isolated examples, see \cite{BDD} for a more thorough survey. 

In \cite{BDD}, the authors present a systematic approach to describing families of elliptic curves defined over $\Q$ whose average root number is not zero: they classify all such "potentially parity-biased" families whose factors, in the parameter $T$, have degree less than or equal to $2$. More precisely,

\begin{defn}
Let $\FF$ be an elliptic curve defined over $\Q(T)$, let $j_{\FF}(T)$ denote the $j$-invariant of $\FF$, and let $r_{\FF}$ denote the rank of $\FF$ over $\Q(T)$. Then,
\begin{itemize}
\item $\FF$ is \textbf{potentially-parity biased over $\Z$} if $\FF$ has no place of multiplicative reduction over $\Q(T)$, except possibly at $-\deg$;
\item $\FF$ is \textbf{parity-biased over $\Z$} if $\Av_{\Z}(\varepsilon_{\FF})$ exists and is non-zero;
\item $\FF$ is \textbf{non-isotrivial} if $j_{\FF}(T)$ is non-constant; otherwise, $\FF$ is \textbf{isotrivial};
\item $\FF$ has \textbf{excess rank} if  $\Av_{\Z}(\varepsilon_{\FF})$ exists and $\Av_{\Z}(\varepsilon_{\FF})=-(-1)^{r_{\FF}}$.
\end{itemize}
\end{defn}

\begin{rem}
As the authors in \cite{BDD} remark, there are many examples of isotrivial families. For example, quadratic twists of a fixed elliptic curve $E:y^2=x^3+a_2x^2+a_4x+a_6$ defined over $\Q$ by a polynomial $d(T)\in\Z[T]$, $E^{d(T)}:d(T)y^2=x^3+a_2x^2+a_4x+a_6$, $a_i\in \Z,i=2,4,6$. Furthermore, we have the following implications:
\begin{align*}
    \mbox{Excess Rank}\Rightarrow \mbox{Parity-Biased} \xRightarrow[Conj.]{Helfgott} \mbox{Potentially Parity-Biased.}
\end{align*}
\end{rem}

In Theorems 7 and 8 of \cite{BDD}, the authors show that there are essentially $6$ different classes of non-isotrivial, potentially parity-biased families of elliptic curves defined over $\Q$ whose coefficients, in the parameter $T$, have degree less than or equal to $2$; namely,
\begin{align*}
    &\FF_s(t):y^2=x^3+3tx^2+3sx+st, \mbox{ with $s\in \Z_{\neq 0}$};\\
    &\mathcal{G}_w(t):wy^2=x^3+3tx^2+3tx+t^2, \mbox{ with $w\in \Z_{\neq 0}$};\\
    &\mathcal{H}_w(t):wy^2=x^3+(8t^2-7t+3)x^2-3(2t-1)x+(t+1), \mbox{ with $w\in \Z_{\neq 0}$};\\
    &\mathcal{I}_w(t):wy^2=x^3+t(t-7)x^2-6t(t-6)x+2t(5t-27),\mbox{ with $w\in \Z_{\neq 0}$};\\
    &\mathcal{J}_{m,w}(t):wy^2=x^3+3t^2x^2-3mtx+m^2, \mbox{ with $m,w\in \Z_{\neq 0}$};\\
    &\mathcal{L}_{w,s,v}(t): wy^2=x^3+3(t^2+v)x^2+3sx+s(t^2+v), \mbox{ with $v\in \Z,s,w\in \Z_{\neq 0}$}.
\end{align*}
The authors then compute the average root number for two subfamilies of $\FF_s$,
\begin{align*}
    &\mathcal{W}_a(t):y^2=x^3+tx^2-a(t+3a)x+a^3, \mbox{ with $a\in \Z_{\neq 0}$},\\
    &\mathcal{V}_a(t):y^2=x^3+3tx^2+3atx +a^2t, \mbox{ with $a\in \Z_{\neq 0}$},
\end{align*}
highlighting the key ideas in implementing Helfgott's and Rizzo's work (see also the "Sketch of the proof of Theorem 6" on pages 6-9 in \cite{BDD}, where the authors give a general overview on the correct way to proceed). 
\begin{rem}
Note that $\mathcal{W}_a(t)\cong \FF_{-3^54a^2}(12t+18a)$ and $\mathcal{V}_a(t)\cong \FF_{4a^2}(4t-2a)$.
\end{rem}

In this paper, we complement the work of \cite{BDD} by computing $\Av_{\Z}(\varepsilon_{\FF_s})$; that is, we prove the following:

\begin{restatable}{thm}{mainthm}
\label{mainthm}
Let $\FF_s$ denote the family of elliptic curves defined over $\Q$ whose specializations are given by the Weierstrass equation
\begin{align*}
    \FF_s(t):y^2=x^3+3tx^2+3sx+st, \mbox{ with $s\in \Z_{\neq 0}$}.
\end{align*}
Then, $\Av_{\Z}(\varepsilon_{\FF_s})$ exists with
\begin{align*}
    \Av_{\Z}(\varepsilon_{\FF_s})=-\prod_{\text{$p$ prime}} E_{\FF_s}(p),
\end{align*}
where the $E_{\FF_s}(p)$ are given by Propositions \ref{p>5}, \ref{p=3}, and \ref{p=2}, for $p\geq 5$, $p=3$, and $p=2$, respectively. In particular, $\FF_s$ is parity biased over $\Z$ iff $s\not\equiv 1,3,5 \Mod{8}$.
\end{restatable}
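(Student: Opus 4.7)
The plan is to apply the second half of Proposition~\ref{ppb} to $\FF_s$. First I will verify its hypotheses: a direct computation gives the discriminant $\Delta(T) = -108\,s\,(T^2 - s)^2$, and together with the $c_4$, $c_6$ invariants this will let me check that $\FF_s$ has no multiplicative place over $\Q(T)$ other than possibly $-\deg$, confirming that $\FF_s$ is potentially parity-biased. Moreover, this shows that the irreducible factors of $B_{\FF_s}(T)$ have degree at most $2$, so the Squarefree Sieve Conjecture holds unconditionally for $B_{\FF_s}(T)$ by \cite{Hel04}. Proposition~\ref{ppb} then yields
\[
\Av_{\Z}(\varepsilon_{\FF_s}) = \frac{c_+ + c_-}{2}\,\prod_{p}E_{\FF_s}(p), \qquad E_{\FF_s}(p) := \int_{\Z_p} g_p(t)\,dt.
\]

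Next I will determine the archimedean factor. Helfgott's explicit recipe gives $g_\infty$ as a polynomial controlled by the leading behaviour of the invariants; from the shape of $\FF_s$, I expect $c_+ = c_- = -1$, which accounts for the overall minus sign in the theorem statement and rules out any archimedean cancellation.

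The core of the proof is the computation of the local factors $E_{\FF_s}(p)$. For $p \geq 5$, Tate's algorithm (equivalently, Rizzo's streamlined tables) expresses $g_p(t)$ purely in terms of the triple $(\nu_p(c_4(t)), \nu_p(c_6(t)), \nu_p(\Delta(t)))$; I will partition $\Z_p$ into the disks on which this triple is constant, so that the integral collapses into a finite sum depending only on $\nu_p(s)$ and the residue of $s$ modulo a small power of $p$, yielding Proposition~\ref{p>5}. For $p = 2$ and $p = 3$, Rizzo's tables are considerably more elaborate, with many subcases involving Kodaira type, conductor exponent, and quadratic twist behaviour; the case analysis must be done carefully by partitioning $\Z_p$ into smaller disks indexed by the relevant $p$-adic congruences of $t$, and summing the contributions to obtain Propositions~\ref{p=3} and~\ref{p=2}.

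I expect the main obstacle to be the bookkeeping at $p = 2$, where the local integral splits into numerous subcases depending simultaneously on $\nu_2(s)$ and on the residue of $s$ modulo $8$ (and perhaps higher powers of $2$), and where the presence of wild ramification makes Rizzo's tables intricate to traverse. Once the local factors are in hand, the characterization of parity bias should follow by identifying when the product vanishes: I anticipate that $E_{\FF_s}(p) \neq 0$ for every $p \geq 3$ regardless of $s$, so that $\FF_s$ is parity-biased over $\Z$ precisely when $E_{\FF_s}(2) \neq 0$, and the explicit formula for $E_{\FF_s}(2)$ should reduce this condition to the stated congruence $s \not\equiv 1,3,5 \Mod{8}$.
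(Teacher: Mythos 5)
Your high-level strategy --- reduce to Helfgott's averaging theorem, note that $B(x)=x^2-s$ has degree $2$ so the Squarefree Sieve Conjecture holds unconditionally, then compute the local integrals prime by prime --- matches the paper's, and your reading of where the overall minus sign and the mod-$8$ condition come from is consistent with the final formulas. But there is a genuine gap at the central technical step: you identify the functions $g_p$ to be integrated with the local root numbers $w_p(t)$ read off from Rizzo's tables via the triple $(\nu_p(c_4(t)),\nu_p(c_6(t)),\nu_p(\Delta(t)))$. Those raw local root numbers do \emph{not} satisfy the hypothesis of Proposition \ref{main_tool} that $h_p(t)=1$ whenever $\nu_p(B(t))<2$: for $p\nmid 6s$ with $\nu_p(t)=\nu_p(s)=0$ and $\nu_p(t^2-s)=1$, Proposition \ref{local_root_p>5} gives $w_p(t)=\big(\tfrac{-1}{p}\big)$, which equals $-1$ for half of all primes. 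Consequently, when $s$ is a square modulo $p$ one finds $\int_{\Z_p}w_p(t)\,dt=1-\tfrac{2}{p}\big(1-\big(\tfrac{-1}{p}\big)\big)+O(p^{-2})$, the infinite product $\prod_p\int_{\Z_p}w_p(t)\,dt$ diverges to $0$, and the ``product of local averages'' computed this way is simply not the average root number.

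The missing idea is Lemma \ref{w_p^*}: one must replace $w_p(t)$ by the modified local root numbers $w_p^*(t)=w_p(t)\big(\tfrac{-1}{p}\big)^{\nu_p(t^2-s)}$ for $p\ge 5$ (with bespoke definitions at $p=2,3$ and at infinity), verify via $\big(\tfrac{-1}{p}\big)\equiv p \Mod{4}$ and the product formula that the full product over all places is unchanged, and only then integrate; after the modification $w_p^*(t)=1$ whenever $p\nmid 6s$ and $\nu_p(t^2-s)\le 1$, so the product of integrals converges absolutely and Proposition \ref{main_tool} applies. The quantities $E_{\FF_s}(p)$ in Propositions \ref{p>5}, \ref{p=3}, and \ref{p=2} are integrals of $w_p^*$, not of $w_p$, so without this step your local computations would not reproduce them. (Your appeal to Proposition \ref{ppb} does guarantee abstractly that suitable $g_p$ exist, but the explicit recipe you give for computing them produces the wrong functions.) The remaining items in your sketch --- the archimedean limits, the case analysis at $p=2,3$, and the anticipated nonvanishing of $E_{\FF_s}(p)$ for odd $p$ --- are stated as expectations rather than proved, but they are the routine part; the modification of the local root numbers is the part that cannot be skipped.
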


\subsection{Applications}

In this section, we present some areas of mathematics where average root numbers play a role. We only briefly discuss the results here, leaving the rest to the imagination.

\subsubsection{One-level density functions of elliptic surfaces}

As mentioned in \cite{BDD}, the average root numbers of elliptic surfaces defined over $\Q$ appear naturally in the study of elliptic curves and their associated $L$-functions. They show in upcoming work that the one-level density function of an elliptic surface $\FF$, denoted by $W_{\FF}$, is equal to
\begin{align*}
    W_{\FF}(\tau)=r_{\FF}\delta_0(\tau)+\frac{1+(-1)^{r_{\FF}}\Av_{\Z}(\varepsilon_{\FF})}{2}W_{\text{SO(even)}}(\tau)+\frac{1-(-1)^{r_{\FF}}\Av_{\Z}(\varepsilon_{\FF})}{2}W_{\text{SO(odd)}}(\tau),
\end{align*}
where $r_{\FF}$ is the rank of $\FF$ over $\Q(T)$, $\delta_0$ is the Dirac measure at $0$, and $W_{\text{SO(even)}}$ (resp. $W_{\text{SO(odd)}}$) is the one-level density function of the special orthogonal group of even size (resp. odd size). For more on one-level densities and applications of Helfgott's work, see \cite{Mil}.

\subsubsection{Constructing families of elliptic curves with elevated rank}

Assuming the Birch-Swinnerton-Dyer Conjecture, Silverman's Specialization Theorem \cite{Spec} tells us that 
\begin{align*}
    \rank(\FF(t)(\Q))\geq r_{\FF}+\frac{1}{2}(1-\varepsilon_{\FF}(t)(-1)^{r_{\FF}})
\end{align*}
for all but finitely-many $t\in\Q$; in particular, the average root number of $\FF$ provides a lower bound for the rank of each specialization. In \cite{CCH}, the authors use this lower bound to construct families of elliptic curves with \textit{elevated rank}; that is, to construct families of elliptic curves for which $r_{\FF}$ is strictly less than $\rank(\FF(t)(\Q))$ for all but finitely-many $t$.

\begin{rem}
Without assuming BSD, Silverman's Specialization Theorem tells us that $r_{\FF}\leq \rank(\FF(t)(\Q))$ for all but finitely-many $t\in\Q$.
\end{rem}

\subsubsection{Generalizing the congruent number problem}

Given an angle $\frac{\pi}{3}\leq \theta \leq \pi$, a squarefree integer $n$ is called \textit{$\theta$-congruent} if there exists a triangle whose largest angle is $\theta$, whose sides are all rational, and whose area is $n.$ In \cite{Rol}, the author gives an elliptic curve criterion for when a given integer is $\theta$-congruent, he then uses the work of Helfgott \cite{Hel09} to prove some density results concerning $\theta$-congruent numbers.

\subsection{Overview of this paper}

In this section, we provide a general overview of the work contained herein. Once again, our goal is to combine the work of Helfgott \cite{Hel09} and Rizzo \cite{Riz} to compute the average root number of an explicit family of elliptic curves defined over $\Q$. The main tool in proving Theorem \ref{mainthm} is the work of Helfgott; namely,
\begin{prop}[\cite{Hel09},Proposition 7.7]
\label{main_tool}
Let $S$ be a finite set of places of $\Q$, including the place at infinity. For every place $\nu \in S$, let $g_\nu:\Q_\nu\rightarrow \C$ be a bounded function that is locally constant almost everywhere. For every prime $p\not \in S$, let $h_p:\Q_p\rightarrow \C$ be a function that is locally constant almost everywhere and such that $|h_p(x)|\leq 1$ for all $x$. Let $B(x)\in \Z[x]$ be a non-zero polynomial and assume that $h_p(x)=1$ whenever $\nu_p(B(x))<2$. Let 
\begin{align*}
W(n)=\prod_{\nu\in S}g_\nu(n)\prod_{p\not \in S}h_p(n).    
\end{align*}
If the Squarefree Sieve Conjecture holds for $B(x)$, then
\begin{align*}
    \Av_{\Z}(W)=\frac{c_{-}+c_{+}}{2}\prod_{p\in S}\int_{\Z_p}g_p(x)dx\prod_{p\not\in S}\int_{\Z_p}h_p(x)dx,
\end{align*}
where $c_\pm=\lim_{x\rightarrow \pm \infty}g_\infty(x)$ and where $\Av_{\Z}(W):=\lim_{N\rightarrow \infty} \frac{1}{2N}\sum_{|n|\leq N}W(n)$.
\end{prop}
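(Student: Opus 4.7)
The plan is to combine the Squarefree Sieve hypothesis, which truncates the infinite product over primes to a finite one, with a Chinese-Remainder-Theorem equidistribution computation, reducing $\Av_{\Z}(W)$ to a product of local integrals.

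First, I would fix a slowly growing parameter $z = z(N) \to \infty$ with $z = o(\sqrt{N})$ and split
$$\prod_{p \notin S} h_p(n) \;=\; \prod_{\substack{p \notin S \\ p \leq z}} h_p(n) \;\cdot\; \prod_{\substack{p \notin S \\ p > z}} h_p(n).$$
Since $h_p(n) = 1$ whenever $\nu_p(B(n)) < 2$, the tail product equals $1$ unless some prime $p > z$ satisfies $p^2 \mid B(n)$. The contribution to $\sum_{|n|\leq N} W(n)$ from those $n$ admitting such a prime in $(z,\sqrt{N}]$ is bounded by $\sum_{z < p \leq \sqrt{N}} O(N/p^2) = o(N)$ as $z\to\infty$, while the contribution from those admitting such a prime exceeding $\sqrt{N}$ is $o(N)$ by the Squarefree Sieve Conjecture for $B$. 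Because $|h_p| \leq 1$ and the $g_\nu$ are bounded, replacing the tail product by $1$ changes $\sum_{|n|\leq N} W(n)$ by $o(N)$.

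Second, for the truncated factor, each $h_p$ with $p \leq z$ and each finite-place $g_\nu$ is locally constant off a $p$-adic measure-zero set, so it can be approximated, up to a set of arbitrarily small $p$-adic measure, by a function factoring through $\Z/p^{k_p}\Z$ for some $k_p$. The truncated integrand is then periodic in $n$ modulo $M_z := \prod p^{k_p}$, and a standard equidistribution estimate yields
$$\frac{1}{2N}\sum_{|n|\leq N} \sgn(g_\infty(n)) \!\!\prod_{\nu \in S \setminus \{\infty\}}\!\! g_\nu(n) \!\prod_{\substack{p \notin S \\ p \leq z}}\! h_p(n) \;\longrightarrow\; \frac{c_-+c_+}{2}\cdot \frac{1}{M_z}\sum_{r \bmod M_z} \Phi(r),$$
where $\Phi$ denotes the local product and the infinite place contributes $(c_-+c_+)/2$ because $g_\infty$ is locally constant with limits $c_\pm$ at $\pm\infty$. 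The Chinese Remainder Theorem factors the residue sum into local integrals $\int_{\Z_p} g_p\,dx$ for finite $p \in S$ and $\int_{\Z_p} h_p\,dx$ for $p \notin S$ with $p \leq z$. Letting $z \to \infty$ is legitimate because the Euler product converges absolutely: the hypothesis $h_p \equiv 1$ on $\{\nu_p(B) < 2\}$ together with the generic bound $\mathrm{meas}\{x \in \Z_p : \nu_p(B(x)) \geq 2\} = O(1/p^2)$ gives $\int_{\Z_p} h_p\,dx = 1 + O(1/p^2)$.

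The principal obstacle is to coordinate the two limits $N \to \infty$ and $z \to \infty$: the modulus $M_z$ grows with $z$, so the discrepancy in the equidistribution step and the measure-approximation error from the locally-constant-a.e. hypothesis must be controlled uniformly as both parameters tend to infinity. Handling these quantitative estimates is the technical heart of Helfgott's proof, and the Squarefree Sieve Conjecture is essential precisely because it is what converts the otherwise uncontrolled large-prime tail into a quantitatively usable $o(N)$ error.
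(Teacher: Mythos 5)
The paper does not prove this proposition: it is quoted verbatim from Helfgott (Proposition 7.7 of \cite{Hel09}) and used as a black box, so there is no in-paper argument to compare against. Your outline does reproduce the architecture of Helfgott's actual proof --- sieve out the primes $p>z$ with $p^2\mid B(n)$ (medium primes by a direct $O(N/p^2+1)$ count, primes beyond $\sqrt N$ by the Squarefree Sieve Conjecture), reduce the surviving truncated product to the average of a function periodic modulo $M_z$, factor via the Chinese Remainder Theorem into local integrals, and pass to the limit using $\int_{\Z_p}h_p(x)\,dx=1+O(p^{-2})$. Two caveats. First, the display in your second step should carry $g_\infty(n)$, not $\sgn(g_\infty(n))$: the $g_\nu$ are arbitrary bounded $\C$-valued functions, and the infinite place contributes $(c_-+c_+)/2$ simply because a function on $\R$ that is locally constant outside a finite set is eventually constant in each direction. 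Second, coupling $z=z(N)$ makes the equidistribution step delicate, since the modulus $M_z=\prod_{p\le z}p^{k_p}$ must then satisfy $M_{z(N)}=o(N)$ with exponents $k_p$ that depend on the accuracy of the locally-constant approximation; the cleaner and standard route is the iterated limit --- fix $z$, let $N\to\infty$ to obtain the truncated product of local integrals exactly, then let $z\to\infty$, using a tail bound uniform in $N$ to justify the interchange. You flag this interchange as "the technical heart" without carrying it out; that deferral is the one genuine gap in the write-up, though it reduces to a routine $\varepsilon/3$ argument once the two tail estimates (the elementary one for $z<p\le\sqrt N$ and the conjectural one for $p>\sqrt N$) are in place.
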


\begin{rem}
(i) When we say that a function is locally constant almost everywhere, we mean that it is locally constant outside a finite set of points. Recall further that a function $f$ from topological space $X$ into a set $Y$ is \textit{locally constant} if for every $x\in X$ there exists a neighbourhood $U$ about $x$ such that $f$ is constant on $U$.

(ii) We use $\nu$ to represent a place of $\Q$ that is either finite or infinite, so that $\Q_{\nu}=\Q_p$ is the field of $p$-adic numbers if $\nu=p$ is a (finite) prime and $\Q_{\nu}=\R$ if $\nu=\infty$ is the prime/place at infinity. The products indexed by $p$ are over finite primes, under the respective conditions.

(iii) Note that a function $f:\R \rightarrow \C$ that is locally constant almost everywhere (that is, outside a finite set of points) is a step function with finitely-many discontinuities; in particular, $g_\infty(x)$ is constant for all $x$ sufficiently large (sufficiently large and negative, respectively).
\end{rem}

In order to use Proposition \ref{main_tool}, our first goal is to write $\varepsilon_{\FF}(t)$ as an infinite product: this is accomplished by writing the root number of $\FF(t)$ as a product of \textit{local root numbers $w_p(t)$}, 
\begin{align*}
\varepsilon_{\FF}(t)=-\prod_{\text{$p$ prime}} w_p(t).
\end{align*}
Alternatively, one may define the root number $w$ of an elliptic curve $E/\Q$ to be the infinite product of local root numbers (independently of the functional equation associated to $L(s,E)$). The local root numbers are themselves defined by representations of the Weil-Deligne group of $\Q_p$ (with $w_\infty=-1$ for all elliptic curves defined over $\R$); we refer the reader to \cite{del} and \cite{Tate}. 

\begin{rem}
The local root numbers $w_p(t)$ are essentially determined by the reduction type of $\FF(t)$ at $p$; see the tables of Rohrlich \cite{Roh}, Connell \cite{Con}, and Halberstadt \cite{Hal}. Note that, in \cite{Hal}, Halberstadt requires a minimal Weierstrass equation for $\FF(t)$. In \cite{Riz}, Rizzo removes the minimality conditions on $\FF(t)$. For this reason, we use Tables I, II, and III in \cite{Riz} to compute our local root numbers\footnote{As the authors in \cite{BDD} remark, there are the following misprints in \cite{Riz}: in Table II, the line corresponding to $(a,b,c)=(\geq 5,6,9)$ should read $c_6^\prime+2\not\equiv3c_{4,4}\Mod{9}$; in Table III, the second line should read $(a,b,c)=(0,0,\geq 0)$ and the Kodaira symbol at $(a,b,c)=(2,3,1)$ should read $I_2^*$.}.
\end{rem}

Sadly, the local root numbers do not, in general, satisfy the hypotheses of Proposition \ref{main_tool} (see section 1.2 of \cite{Hel09}). In order to rectify this, we then express $\varepsilon_{\FF}(t)$ as a product of modified local root numbers $w_\nu^*(t)$, 
\begin{align*}
\varepsilon_{\FF}(t)=-w_\infty^*(t)\prod_{\text{$p$ prime}} w_p^*(t),
\end{align*}
with $w_\nu^*(t)$ satisfying the hypotheses of Proposition \ref{main_tool}; our choice of $w_\nu^*(t)$ is a natural one (see Remark \ref{mod_loc_rem}). At this point, computing the average root number of $\FF$ amounts to computing the $p$-adic integrals $\int_{\Z_p}w_p^*(t)dt$, which we break into three sections (for $p\geq 5, p=3, p=2$), and we have that
\begin{align*}
    \Av_{\Z}(\varepsilon_{\FF})=-\prod_{\text{$p$ prime}}\int_{\Z_p}w_p^*(t)dt,
\end{align*}
as our choice of $w_\infty^*(t)$ is equal to $1$ for all but finitely-many $t\in\Z$.

\begin{rem}
In all that follows, the letter $p$ will denote a (finite) prime and products over $p$ are understood to be over all (finite) primes. In the case where a product involves the added "prime/place at infinity," we will make this explicit by writing the product over $p\leq \infty$. As usual, $\Z_p$ denotes the ring of $p$-adic integers and for all $n\in \Z_p$, $\nu_p(n)$ denotes the $p$-adic valuation of $n$. We use the identification $\Z\hookrightarrow \Z_p$ freely and set $n_p:=np^{-\nu_p(n)}$ for all $n\in \Z_p\setminus \{0\}.$
\end{rem}
\section{The family $\FF_s$ and its average root number}

From now on, we concern ourselves with the Weierstrass equation
\begin{align*}
    \FF_s(t):y^2=x^3+3tx^2+3sx+st, \mbox{ $s\in \Z, s\neq 0$},
\end{align*}
for which we have
\begin{align*}
    &c_4(t)=2^43^2(t^2-s),\\
    &c_6(t)=-2^63^3t(t^2-s),\\
    &\Delta(t)=-2^63^3s(t^2-s)^2,\\
    &j(t)=\frac{-2^63^3}{s}(t^2-s).
\end{align*}
We prove the following:

\mainthm*

\section{Modifying the local root numbers of $\FF_s(t)$}

The local root numbers of $\FF_s(t)$ can be found in Appendix A of \cite{BDD}. For convenience to the reader, we list the results for $p\geq 5$:

\begin{prop}
\label{local_root_p>5}
For $p \geq 5$,
\begin{itemize}
    \item if $0\leq 2\nu_p(t)<\nu_p(s)$, then
    \begin{align*}
    w_p(t)=
   \left\{\def\arraystretch{1.2}%
\begin{array}{@{}l@{\quad}l@{}}
-\Big(\frac{3t_p}{p}\Big) & \mbox{if $\nu_p(t)$ is even,}\\
\Big(\frac{-1}{p}\Big) & \mbox{if $\nu_p(t)$ is odd;}
\end{array}\right.
\end{align*}
\item if  $0\leq \nu_p(s)<2\nu_p(t)$, then
  \begin{align*}
    w_p(t)=
   \left\{\def\arraystretch{1.2}%
\begin{array}{@{}l@{\quad}l@{}}
\Big(\frac{-1}{p}\Big)^{\frac{\nu_p(s)}{2}} & \mbox{if $\nu_p(s)$ is even,}\\
\Big(\frac{-2}{p}\Big)& \mbox{if $\nu_p(s)$ is odd;}
\end{array}\right.
\end{align*}
\item if $0\leq 2\nu_p(t)=\nu_p(s)$, then
\begin{align*}
    w_p(t)=
   \left\{\def\arraystretch{1.2}%
\begin{array}{@{}l@{\quad}l@{}}
\Big(\frac{-1}{p}\Big) & \mbox{if $\nu_p(t)+\nu_p(t^2-s)\equiv 1 \Mod{2}$,}\\
\Big(\frac{-3}{p}\Big)& \mbox{if $\nu_p(t)+\nu_p(t^2-s)\equiv 2,4 \Mod{6}$,}\\
1 & \mbox{if $\nu_p(t)+\nu_p(t^2-s)\equiv 0\Mod{6}$.}
\end{array}\right.
\end{align*}
\end{itemize}
\end{prop}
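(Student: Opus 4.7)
The plan is to apply Tables I of \cite{Riz}, which determines the local root number $w_p$ at a prime $p \geq 5$ as a function of the triple $(a,b,c) := (\nu_p(c_4), \nu_p(c_6), \nu_p(\Delta))$ together, when necessary, with the residue of $c_4/p^a$ or $c_6/p^b$ modulo a small power of $p$. Since $p \geq 5$, the factors $2^i 3^j$ appearing in the formulas for $c_4, c_6, \Delta$ are $p$-adic units, so
\begin{align*}
 a = \nu_p(t^2-s), \qquad b = \nu_p(t) + \nu_p(t^2-s), \qquad c = \nu_p(s) + 2\nu_p(t^2-s).
\end{align*}
The first step is therefore to compute $\nu_p(t^2-s)$ in each of the three regimes.

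In Case 1, where $2\nu_p(t) < \nu_p(s)$, the ultrametric inequality forces $\nu_p(t^2-s) = 2\nu_p(t)$, yielding $(a,b,c) = (2\nu_p(t), 3\nu_p(t), 4\nu_p(t) + \nu_p(s))$; the Kodaira type then cycles through $I_0, I_n^*, II^*, \ldots$ depending on $\nu_p(t)$ and $\nu_p(s) \bmod 2$, and the formulas in Rizzo's table reduce to the stated Legendre symbols $\big(\tfrac{3t_p}{p}\big)$ or $\big(\tfrac{-1}{p}\big)$. Case 2 ($\nu_p(s) < 2\nu_p(t)$) is symmetric: $\nu_p(t^2-s) = \nu_p(s)$, producing $(a,b,c) = (\nu_p(s), \nu_p(t) + \nu_p(s), 3\nu_p(s))$, from which the leading $p$-adic unit of $c_6$ is, up to a power of $p$, a unit times $s_p$, and Rizzo's table outputs the quadratic characters $\big(\tfrac{-1}{p}\big)^{\nu_p(s)/2}$ or $\big(\tfrac{-2}{p}\big)$ after a short simplification.

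Case 3 ($2\nu_p(t) = \nu_p(s)$) is the delicate one, and where I would expect the main obstacle to lie. Here only $\nu_p(t^2-s) \geq 2\nu_p(t)$ is automatic; the exact value can be arbitrary, so the Kodaira type is controlled by the residue of $b = \nu_p(t) + \nu_p(t^2-s)$ modulo $6$. I would partition this case according to $b \bmod 6$, match each subcase to the corresponding row of Rizzo's Table I (covering types $I_n, III, IV, I_0^*, IV^*, III^*$), and verify that the formulas collapse to $1$, $\big(\tfrac{-3}{p}\big)$, or $\big(\tfrac{-1}{p}\big)$ as stated. The verification itself is mechanical but bookkeeping-heavy: it requires tracking leading $p$-adic units of $c_4, c_6$ through the reduction $(a,b,c) \pmod {(4,6,12)}$ and confirming that the Legendre-symbol identities $\big(\tfrac{ab^2}{p}\big) = \big(\tfrac{a}{p}\big)$ and $\big(\tfrac{-1}{p}\big)\big(\tfrac{3}{p}\big) = \big(\tfrac{-3}{p}\big)$ reconcile Rizzo's raw output with the compact form given in the proposition. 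Once these three cases are dispatched, the proposition follows.
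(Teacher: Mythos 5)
The paper itself gives no proof of this proposition: it is quoted verbatim from Appendix~A of \cite{BDD}, which in turn is obtained exactly as you propose, by feeding the triple $(a,b,c)=(\nu_p(c_4),\nu_p(c_6),\nu_p(\Delta))$ into Rizzo's Table~I. Your computation of that triple in each regime is correct ($\nu_p(t^2-s)$ equals $2\nu_p(t)$, $\nu_p(s)$, or is only bounded below, in the three cases respectively), and the reduction to Rohrlich/Rizzo's entries does yield the stated symbols; e.g.\ in Case~1 with $\nu_p(t)$ even one lands on type $I_n$ with $-c_6'\equiv 2^63^3t_p^3\pmod p$, giving $w_p=-\big(\tfrac{3t_p}{p}\big)$, and with $\nu_p(t)$ odd one lands on $(2,3,\ge 7)$, i.e.\ type $I_n^*$ with $w_p=\big(\tfrac{-1}{p}\big)$. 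Two small corrections so that the bookkeeping does not lead you astray: in Case~1 the only Kodaira types that occur are $I_n$ and $I_n^*$ (the reduction is potentially multiplicative there, since $\nu_p(j)=2\nu_p(t)-\nu_p(s)<0$), not $II^*$; and in Case~3 one has $\nu_p(\Delta)=2\big(\nu_p(t)+\nu_p(t^2-s)\big)$, which is always even, so the types $III$ and $III^*$ (odd discriminant valuation, $w_p=\big(\tfrac{-2}{p}\big)$) never arise --- the possible types are $I_0, II, IV, I_0^*, IV^*, II^*$, which is precisely why only $1$, $\big(\tfrac{-3}{p}\big)$, and $\big(\tfrac{-1}{p}\big)$ appear in the statement. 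With those adjustments the case-by-case table lookup you defer is routine and closes the argument.
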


\begin{rem}
$\Big(\frac{\cdot}{p}\Big)$ represents the \textit{Legendre symbol}.
\end{rem}

So far, we can write the root number of $\FF_s(t)$ as a product of local root numbers
\begin{align*}
    \varepsilon_{\FF_{s}}(t)=-\prod_{\text{$p$ prime}} w_p(t),
\end{align*}
with $w_p(t)$ given by Proposition \ref{local_root_p>5} for $p\geq 5$ and $w_3(t),w_2(t)$ as in Appendix A of \cite{BDD}. Our next goal is to modify the local root numbers in order to apply Proposition \ref{main_tool}.

\begin{lem}
\label{w_p^*}
For $p\geq 5$, let $w_p^*(t)=w_p(t)\Big(\frac{-1}{p}\Big)^{\nu_p(t^2-s)}$. For $p=2,3,$ and for the prime at infinity, let $w_2^*(t),w_3^*(t),w_\infty^*(t)\in \{\pm 1\}$ be defined by
\begin{align*}
    &w_2^*(t)\equiv (t^2-s)_2w_2(t) \Mod{4},\\
    &w_3^*(t)=(-1)^{\nu_3(t^2-s)}w_3(t),\\
    &w_\infty^*(t)=\sgn(t^2-s).
\end{align*}
Then,
\begin{align}
\label{prod1=prod2}
    \varepsilon_{\FF_{s}}(t)=-\prod_{\text{$p$ prime}} w_p(t)=-w_\infty^*(t)\prod_{\text{$p$ prime}} w_p^*(t).
\end{align}
\end{lem}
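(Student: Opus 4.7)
The plan is to treat the first equality as given (it is exactly the statement displayed immediately before the lemma) and concentrate on the second equality. Since all quantities are $\pm 1$, dividing the two products reduces the claim to the single identity
\begin{align*}
w_\infty^*(t) \cdot \frac{w_2^*(t)}{w_2(t)} \cdot \frac{w_3^*(t)}{w_3(t)} \cdot \prod_{p\geq 5} \frac{w_p^*(t)}{w_p(t)} = 1.
\end{align*}
I would first unpack each factor using the definitions in the statement. The factors for $\infty$, $3$, and $p\geq 5$ are read off directly, giving $\sgn(t^2-s)$, $(-1)^{\nu_3(t^2-s)}$, and $\bigl(\tfrac{-1}{p}\bigr)^{\nu_p(t^2-s)}$ respectively. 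For $p=2$ I would observe that since $(t^2-s)_2$ is odd and $w_2^*(t),w_2(t)\in\{\pm 1\}$, the congruence $w_2^*(t)\equiv (t^2-s)_2 w_2(t)\pmod 4$ forces
\begin{align*}
\frac{w_2^*(t)}{w_2(t)} = (-1)^{((t^2-s)_2-1)/2},
\end{align*}
i.e.\ the unique $\pm 1$ congruent to $(t^2-s)_2$ modulo $4$.

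The heart of the argument is then the observation that the map $\chi: n\mapsto (-1)^{(n-1)/2}$ defined on nonzero odd integers is completely multiplicative, with $\chi(-1)=-1$ and $\chi(p)=\bigl(\tfrac{-1}{p}\bigr)$ for every odd prime $p$. This is the standard first supplement to quadratic reciprocity, proved by checking that $(a-1)/2 + (b-1)/2 \equiv (ab-1)/2 \pmod 2$ for odd $a,b$. Writing $m:=t^2-s$ and factoring its odd part as $m_2 = \sgn(m)\prod_{p\geq 3} p^{\nu_p(m)}$, applying $\chi$ yields
\begin{align*}
(-1)^{(m_2-1)/2} = \sgn(m)\,(-1)^{\nu_3(m)}\prod_{p\geq 5}\Bigl(\tfrac{-1}{p}\Bigr)^{\nu_p(m)}.
\end{align*}

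Plugging this identity into the reduced form of the lemma, the factor $\sgn(t^2-s)$ contributed by $w_\infty^*$ cancels with the corresponding factor in $w_2^*/w_2$, and each Legendre symbol (including the $p=3$ sign $(-1)^{\nu_3(t^2-s)}$) then appears with an even exponent and collapses to $1$. This would finish the proof. The only step that is not immediate bookkeeping is verifying the stated formula for $w_2^*/w_2$ from the congruence definition; I do not expect a real obstacle here, since both sides are $\pm 1$ and the only subtlety is keeping track of the sign $w_2(t)$ when multiplying through the congruence modulo $4$.
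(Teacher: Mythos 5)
Your proof is correct and is essentially the paper's argument in different notation: the paper's key step $\bigl(\tfrac{-1}{p}\bigr)\equiv p \Mod{4}$ is exactly your identity $\chi(p)=\bigl(\tfrac{-1}{p}\bigr)$ for the character $\chi(n)=(-1)^{(n-1)/2}$, and both proofs then collapse the product of these symbols over odd primes into the odd part of $t^2-s$ taken modulo $4$, which cancels against the contributions of $w_2^*$, $w_3^*$, and $w_\infty^*$.
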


\begin{rem}
\label{mod_loc_rem}
The choice of $w_p^*(t)$ is a natural one, more or less. We begin by assuming $p\geq 5$, $p\nmid s$, and $p| \Delta(t)=-2^63^3s(t^2-s)^2$, so that $\nu_p(t^2-s)>0$ (if $p\nmid \Delta,$ then $w_p(t)=1$ and this does not pose a problem in applying Proposition \ref{main_tool}; similarly, the assumption that $p\nmid 6s$ throws away a finite number of primes, which will belong to the set $S$ in Proposition \ref{main_tool}). We have two cases to consider: $\nu_p(t)=\nu_p(s)=0$ and $\nu_p(t)>\nu_p(s)=0$. In the first case,
\begin{align*}
     w_p(t)=
   \left\{\def\arraystretch{1.2}%
\begin{array}{@{}l@{\quad}l@{}}
\Big(\frac{-1}{p}\Big) & \mbox{if $\nu_p(t^2-s)\equiv 1\Mod{2}$,}\\
\Big(\frac{-3}{p}\Big) & \mbox{if $\nu_p(t^2-s)\equiv 2,4 \Mod{6},$}\\
1 & \mbox{if $\nu_p(t^2-s)\equiv 0\Mod{6},$}
\end{array}\right.
\end{align*}
whereas, $w_p(t)=1$ in the second case. Taking
\begin{align*}
    w_p^*(t)=w_p(t)\Big(\frac{-1}{p}\Big)^{\nu_p(t^2-s)},
\end{align*}
we see that $w_p^*(t)=1$ whenever $p\nmid 6s$ and $\nu_p(t^2-s)\leq 1$. The choices of $w_2^*(t),w_3^*(t),w_\infty^*(t)$ are then made so that Equation \ref{prod1=prod2} holds. Combining this remark together with Lemma \ref{w_p^*} allows us to apply Proposition \ref{main_tool}.
\end{rem}

\begin{proof}
For $p$ odd, $\Big(\frac{-1}{p}\Big)\equiv p\Mod{4}$, so that
\begin{align*}
    \prod_{p\neq 2,3}w_p^*(t)
    &=\prod_{p\neq 2,3}\Big(\frac{-1}{p}\Big)^{\nu_p(t^2-s)}\prod_{p\neq 2,3} w_p(t)\\
    &\equiv \prod_{p\neq 2,3} p^{\nu_p(t^2-s)}\prod_{p\neq 2,3} w_p(t) \Mod{4}\\
    &\equiv (-1)^{\nu_3(t^2-s)}\prod_{p\neq 2}p^{\nu_p(t^2-s)}\prod_{p\neq 2,3} w_p(t) \Mod{4}\\
    &=(-1)^{\nu_3(t^2-s)}|(t^2-s)_2|\prod_{p\neq 2,3} w_p(t);
\end{align*}
thus,
\begin{align*}
    -w_\infty^*(t)\prod_{\text{$p$ prime}} w_p^*(t)=-\prod_{\text{$p$ prime}} w_p(t).
\end{align*}
\end{proof}

Applying Proposition \ref{main_tool} with $S=\{p:p\nmid 6s\}\cup \{\infty\},g_\nu=w_\nu^*,h_p=w_p^*,$ and $B(x)=x^2-s$, we have that
\begin{align*}
    \Av_{\Z}(\varepsilon_{\FF_{s}})=-\prod_{\text{$p$ prime}} \int_{\Z_p}w_p^*(t)dt,
\end{align*}
as $w_\infty^*(t)=1$ for all but finitely-many integers $t$. 

\begin{rem}
Recall that the Squarefree Sieve Conjecture (Conjecture \ref{SFSC}) holds for all polynomials whose irreducible factors are of degree $\leq 3$ \cite{Hel04}. Since we are applying Proposition \ref{main_tool} with $B(x)=x^2-s$, our results are unconditional.
\end{rem}

The next few sections are devoted to computing the $p$-adic integrals $\int_{\Z_p}w_p^*(t)dt$ for $p\geq 5,p=3$, and $p=2$, respectively.

\section{$p$-uniformly locally constant multiplicative functions}

In our work, we deal with functions Rizzo calls \textit{$p$-uniformly locally constant multiplicative functions}. We will see that these functions are locally constant everywhere, except possibly at $0$, which is what we need in order to apply Proposition \ref{main_tool}.

\begin{defn} [\cite{Riz}, p.11]
A function $f:\Z_p\rightarrow \R$ is a \textbf{$p$-uniformly locally constant multiplicative function} if there exists a positive integer $\eta$ such that the value of $f$ at $x\in \Z_p$ is completely determined by $\nu_p(x)$ and $x_p:=xp^{-\nu_p(x)}\Mod{p^\eta}$. We call $\eta$ a \textbf{uniformity constant} of $f$
\end{defn}

\begin{rem}
Note that uniformity constants are not unique: if the value of $f$ at $x$ is determined by $\nu_p(x)$ and $x_p \Mod{p^\eta}$, then it is certainly determined by $\nu_p(x)$ and $x_p\Mod{p^{\eta^\prime}}$ for any $\eta^\prime\geq \eta$.
\end{rem}

From the definition above, it should be clear that all $p$-uniformly locally constant multiplicative functions are locally constant on $p^e\Z_p^*:=\{x\in \Z_p:\nu_p(x)=e\}$ for all $e\geq 0$. To see this, let $\eta$ be a uniformity constant of $f$, partition $p^e\Z_p^*$ into $p^{\eta-1}(p-1)$ disjoint balls of radius $p^{e+\eta}$,
\begin{align*}
    p^e\Z_p^*=\bigcup_{\substack{{\alpha_i=0,1,\dots,p-1}\\{\alpha_0\neq 0}}} p^e(\alpha_0+\alpha_1 p+\dots + \alpha_{\eta-1}p^{\eta-1})+p^{e+\eta}\Z_p,
\end{align*}
and note that $f$ is constant on each ball. From here, it is easy to see that
\begin{align*}
    \int_{\nu_p(t)=e} f(t)dt:=\int_{p^e\Z_p^*}f(t)dt=\sum_{d\in (\Z/p^\eta\Z)^*}\frac{f(dp^e)}{p^{e+\eta}}.
\end{align*}
We extend the above expression to all of $\Z_p$ by writing
\begin{align*}
    \int_{\Z_p}f(t)dt=\sum_{e=0}^\infty \int_{\nu_p(t)=e}f(t)dt,
\end{align*}
provided the sum converges absolutely.

\section{Computing $\int_{\Z_p}w_p^*(t)dt$ for $p\geq 5$}

During the calculations involved in computing $\int_{\Z_p}w_p^*(t)dt$ for $p\geq 5$, we will need to deal with integrals of the form
\begin{align*}
    \int_{\substack{{\nu_p(t)=\frac{\nu_p(s)}{2}}\\{\nu_p(t^2-s)=\nu_p(s)+k}}} 1 dt,
\end{align*}
for $k\in\Z_{\geq 0}$; this is accomplished in the following lemma,

\begin{lem}
\label{lem_meas}
For $k\in\Z_{\geq 0}$, let $S_k:=\{t\in\Z_p:\nu_p(t)=\frac{\nu_p(s)}{2},\nu_p(t^2-s)=\nu_p(s)+k\}$. Then, $S_k$ has measure
\begin{align*}
   \mu(S_k)= \left\{\def\arraystretch{1.2}%
\begin{array}{@{}l@{\quad}l@{}}
0 & \mbox{if $\nu_p(s)$ is odd,}\\
 \left\{\def\arraystretch{1.2}%
\begin{array}{@{}l@{\quad}l@{}}
\frac{p-1}{p^{\frac{\nu_p(s)}{2}+1}}& \mbox{if $\Big(\frac{s_p}{p}\Big)=-1$ and $k=0$,}\\
\frac{p-3}{p^{\frac{\nu_p(s)}{2}+1}}& \mbox{if $\Big(\frac{s_p}{p}\Big)=1$ and $k=0,$}\\
0 & \mbox{if $\Big(\frac{s_p}{p}\Big)=-1$ and $k\geq 1$,}\\
\frac{2(p-1)}{p^{\frac{\nu_p(s)}{2}+k+1}} & \mbox{if $\Big(\frac{s_p}{p}\Big)=1$ and $k\geq 1$,}
\end{array}\right.& \mbox{if $\nu_p(s)$ is even.}
\end{array}\right.
\end{align*}
\end{lem}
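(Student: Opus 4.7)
The plan is to reduce to an integral over $\Z_p^*$ via a change of variables and then count residues modulo small powers of $p$. First I would dispose of the parity condition: if $\nu_p(s)$ is odd, then $\nu_p(s)/2 \notin \Z$, so the defining condition $\nu_p(t) = \nu_p(s)/2$ cannot be satisfied for any $t \in \Z_p$ and $S_k = \emptyset$.

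Assume from now on $\nu_p(s) = 2m$ is even. Write $s = p^{2m} s_p$ with $s_p \in \Z_p^*$ and make the substitution $t = p^m u$. Then $t^2 - s = p^{2m}(u^2 - s_p)$, so $S_k$ corresponds bijectively to $T_k := \{u \in \Z_p^* : \nu_p(u^2 - s_p) = k\}$. Because the additive Haar measure scales by $|p^m|_p = p^{-m}$ under multiplication by $p^m$, we have $\mu(S_k) = p^{-m}\mu(T_k)$, and it suffices to compute $\mu(T_k)$.

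For $k = 0$, I would partition $\Z_p^*$ into the $p-1$ balls of radius $1/p$ indexed by the nonzero residues modulo $p$ and observe that exactly $1 + \big(\frac{s_p}{p}\big)$ of them satisfy $u^2 \equiv s_p \pmod p$. The remaining balls each have measure $1/p$, which yields $\mu(T_0) = (p-1)/p$ when $s_p$ is a non-residue and $\mu(T_0) = (p-3)/p$ when it is a residue, giving the stated values after multiplying by $p^{-m}$. For $k \geq 1$: if $s_p$ is a non-residue, then already $u^2 \not\equiv s_p \pmod p$, forcing $\nu_p(u^2 - s_p) = 0$ and $T_k = \emptyset$. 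If $s_p$ is a residue, Hensel's lemma produces exactly two square roots $\pm r \in \Z_p^*$, and I would use the factorization $u^2 - s_p = (u-r)(u+r)$. Since $p$ is odd, $2r$ is a unit, so $\min(\nu_p(u-r), \nu_p(u+r)) = 0$; therefore $\nu_p(u^2 - s_p) = k$ iff exactly one of the two valuations equals $k$ and the other vanishes. The set $\{u : \nu_p(u \mp r) = k\}$ is the annulus of measure $p^{-k} - p^{-(k+1)} = (p-1)/p^{k+1}$, and the two annuli (for $+r$ and $-r$) are disjoint, being contained in distinct residue classes modulo $p$. Summing and multiplying by $p^{-m}$ gives the last formula.

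The only (quite mild) obstacle is the $k \geq 1$ subcase, where one needs both that $\nu_p(u-r)$ and $\nu_p(u+r)$ cannot simultaneously be positive and that the two contributing annuli are disjoint; both facts rely on $p$ odd, which ensures $r \not\equiv -r \pmod p$.
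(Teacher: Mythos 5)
Your proof is correct, and it reaches the stated measures by a genuinely different route from the paper. The paper works directly with the characteristic function of $S_k$ as a $p$-uniformly locally constant multiplicative function: it reduces $\mu(S_k)$ to a count of residues $d\in(\Z/p^{k+1}\Z)^*$ with $d^2\equiv s_p\Mod{p^k}$ but $d^2\not\equiv s_p\Mod{p^{k+1}}$, and for $k\geq 1$ it handles this by a telescoping identity $\mu(S_k)=\mu(S_k^*)-\mu(S_{k+1}^*)$, where $S_k^*$ imposes only $\nu_p(t^2-s)\geq \nu_p(s)+k$ and its measure is computed from the fact that a unit square modulo $p$ has exactly two square roots modulo every $p^k$. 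You instead rescale by $t=p^mu$, invoke Hensel's lemma to factor $u^2-s_p=(u-r)(u+r)$ over $\Z_p$, and read off $T_k$ as a disjoint union of two annuli around $\pm r$; the key observations (that $\min(\nu_p(u-r),\nu_p(u+r))=0$ and that the annuli are disjoint, both because $2r$ is a unit for $p$ odd) are exactly right, and the annulus measure $(p-1)/p^{k+1}$ gives the claimed answer. Your factorization argument is arguably cleaner and more conceptual for the $k\geq 1$ case; the paper's counting template has the advantage that it is the same machinery (uniformity constants and sums over $(\Z/p^\eta\Z)^*$) reused throughout the rest of the computation, including at $p=2$ and $p=3$ where an explicit linear factorization is less convenient. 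Both arguments agree on the $k=0$ count of $p-1-\bigl(1+\bigl(\frac{s_p}{p}\bigr)\bigr)$ admissible residue classes.
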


\begin{proof}
We assume that $\nu_p(s)$ is even; otherwise, $S_k=\emptyset$ and there is nothing to prove. Let $\chi_k$ denote the characteristic function of $S_k$. For $t\in \Z_p$, $\chi_k(t)=1$ iff $\nu_p(t)=\frac{\nu_p(s)}{2}$ and $t_p^2\in s_p+p^k\Z_p^*$. Point is, $\chi_k$ is a $p$-uniformly locally constant multiplicative function with uniformity constant $\eta=k+1$. Hence,
\begin{align*}
    \mu(S_k)&:=\int_{\substack{{\nu_p(t)=\frac{\nu_p(s)}{2}}\\{\nu_p(t^2-s)=\nu_p(s)+k}}} 1 dt\\
    &=\frac{1}{p^{\frac{\nu_p(s)}{2}+k+1}}\sum_{d\in(\Z/p^{k+1}\Z)^*}\chi_k(dp^\frac{\nu_p(s)}{2}).
\end{align*}
We begin with the case $k=0$ and treat the other cases separately. 

For $k=0$, $\chi_0(dp^\frac{\nu_p(s)}{2})=1$ iff $d^2\not\equiv s_p \Mod{p}$. If $s_p$ is not a square modulo $p$, then all $d\in (\Z/p\Z)^*$ possess the preceding quality; on the other hand, if $s_p$ is a square modulo $p$, exactly two $d\in(\Z/p\Z)^*$ are such that $d^2\equiv s_p \Mod{p}$. Therefore, 
\begin{align*}
    \mu(S_0)=\left\{\def\arraystretch{1.2}%
\begin{array}{@{}l@{\quad}l@{}}
\frac{p-1}{p^{\frac{\nu_p(s)}{2}+1}}& \mbox{if $\Big(\frac{s_p}{p}\Big)=-1,$}\\
\frac{p-3}{p^{\frac{\nu_p(s)}{2}+1}}& \mbox{if $\Big(\frac{s_p}{p}\Big)=1$.}
\end{array}\right.
\end{align*}

Now, suppose that $k\in\N$ and let $S_k^*:=\{t\in\Z_p:\nu_p(t)=\frac{\nu_p(s)}{2},\nu_p(t^2-s)\geq \nu_p(s)+k\}$. Since $S_k=S_k^*\setminus S_{k+1}^*$, with $\mu(S_{k+1}^*)<\infty$, $\mu(S_k)=\mu(S_k^*)-\mu(S_{k+1}^*)$. Moreover, if we let $\chi_k^*$ denote the characteristic function of $S_k^*$, then $\chi_k^*$ is a $p$-uniformly locally constant multiplicative function with uniformity constant $\eta=k$. Therefore,
\begin{align*}
    \mu(S_k^*)=\frac{1}{p^{\frac{\nu_p(s)}{2}+k}}\sum_{d\in(\Z/p^k\Z)^*}\chi_k^*(dp^\frac{\nu_p(s)}{2}),
\end{align*}
with $\chi_k^*(dp^\frac{\nu_p(s)}{2})=1$ iff $d^2\equiv s_p\Mod{p^k}$. Since an integer $a$ relatively prime to $p$ is a square modulo $p$ iff $a$ is a square modulo $p^n$ for every $n\in\N$, we have that
\begin{align*}
     \mu(S_k^*)
     =\left\{\def\arraystretch{1.2}%
\begin{array}{@{}l@{\quad}l@{}}
0 & \mbox{if $\Big(\frac{s_p}{p}\Big)=-1$,}\\
\frac{2}{p^{\frac{\nu_p(s)}{2}+k}} & \mbox{if $\Big(\frac{s_p}{p}\Big)=1$;}
\end{array}\right.
\end{align*}
and so,
\begin{align*}
    \mu(S_k)
    =\mu(S_k^*)-\mu(S_{k+1}^*)
    =\left\{\def\arraystretch{1.2}%
\begin{array}{@{}l@{\quad}l@{}}
0 & \mbox{if $\Big(\frac{s_p}{p}\Big)=-1$}\\
\frac{2(p-1)}{p^{\frac{\nu_p(s)}{2}+k+1}} & \mbox{if $\Big(\frac{s_p}{p}\Big)=1$,}
\end{array}\right.
\end{align*}
as claimed.
\end{proof}


We are now in a position to prove the following:

\begin{prop}
\label{p>5}
For $p\geq 5$,
\begin{align*}
    &\int_{\Z_p}w_p^*(t)dt\\
    &=
    \left\{\def\arraystretch{1.2}%
\begin{array}{@{}l@{\quad}l@{}}
\Big(\frac{-1}{p}\Big)^\frac{
\nu_p(s)}{2}\frac{1}{p^{\frac{
\nu_p(s)}{2}+1}} & \mbox{if $\nu_p(s)$ is even},\\
\Big(\frac{2}{p}\Big)\frac{1}{p^{\frac{\nu_p(s)+1}{2}}} & \mbox{if $\nu_p(s)$ is odd,}
\end{array}\right.\\
&+\left\{\def\arraystretch{1.2}%
\begin{array}{@{}l@{\quad}l@{}}
0 & \mbox{if $\nu_p(s)=0,1,2$,}\\
\Big(\frac{-1}{p}\Big)\frac{p-1}{p^2} & \mbox{if $\nu_p(s)=3,4,5,6$,}\\
\Big(\frac{-1}{p}\Big)\frac{1}{p+1}\cdot
\left\{\def\arraystretch{1.2}%
\begin{array}{@{}l@{\quad}l@{}}
1-p^{-2\alpha} & \mbox{if $\nu_p(s)\equiv 2\Mod{4}$,}\\
1-p^{-2\alpha-2}& \mbox{otherwise,}
\end{array}\right.
&\mbox{if $\nu_p(s)\geq 7$,}
\end{array}\right.\\
&+\left\{\def\arraystretch{1.2}%
\begin{array}{@{}l@{\quad}l@{}}
0 & \mbox{if $\nu_p(s)$ is odd,}\\
\left\{\def\arraystretch{1.2}%
\begin{array}{@{}l@{\quad}l@{}}
\Big(\frac{-1}{p}\Big)^{\frac{j}{2}}\frac{p-1}{p^{\frac{\nu_p(s)}{2}+1}} & \mbox{if $\Big(\frac{s_p}{p}\Big)=-1$,}\\
\Big(\frac{-1}{p}\Big)^{\frac{j}{2}}\frac{p-1}{p^{\frac{\nu_p(s)}{2}+1}} & \mbox{if $\Big(\frac{s_p}{p}\Big)=1$ and $p\equiv 1 \Mod{3}$,}\\
 \Big(\frac{-1}{p}\Big)^{\frac{j}{2}}\frac{1}{p^{\frac{\nu_p(s)}{2}+1}}\Biggr(
p-(2j+1)-4(-1)^\frac{j}{2}\frac{p^4+\frac{j}{2}p^3+p^2+\frac{j}{2}}{(p+1)(p^4+p^2+1)}\Biggr) & \mbox{if $\Big(\frac{s_p}{p}\Big)=1$ and $p\equiv 2 \Mod{3}$,}
\end{array}\right.
& \mbox{if $\nu_p(s)$ is even,}
\end{array}\right.
\end{align*}
where $\alpha=\lfloor \frac{\nu_p(s)-2}{4} \rfloor$ and $j\in\{0,2\}$ is such that $\nu_p(s)\equiv j \Mod{4}$ (for $\nu_p(s)$ even).
\end{prop}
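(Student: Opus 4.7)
The plan is to split the integral according to the three mutually exclusive regimes distinguished in Proposition \ref{local_root_p>5}:
\begin{align*}
  \int_{\Z_p}w_p^*(t)dt=I_1+I_2+I_3,
\end{align*}
where $I_1,I_2,I_3$ are the integrals of $w_p^*(t)$ over $\{t\in\Z_p:\,0\leq 2\nu_p(t)<\nu_p(s)\}$, $\{t\in\Z_p:\,0\leq\nu_p(s)<2\nu_p(t)\}$, and $\{t\in\Z_p:\,0\leq 2\nu_p(t)=\nu_p(s)\}$ respectively. These three pieces should correspond, in the order $I_2$, $I_1$, $I_3$, to the three bracketed displayed expressions in the statement.

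The piece $I_2$ is immediate: on this domain $\nu_p(t^2-s)=\nu_p(s)$, so $w_p^*(t)=w_p(t)\bigl(\tfrac{-1}{p}\bigr)^{\nu_p(s)}$ is a constant equal to $\bigl(\tfrac{-1}{p}\bigr)^{\nu_p(s)/2}$ or $\bigl(\tfrac{2}{p}\bigr)$, according to the parity of $\nu_p(s)$ (by Proposition \ref{local_root_p>5}). Multiplying by the measure $p^{-(\lfloor\nu_p(s)/2\rfloor+1)}$ of the integration region gives the first displayed term. For $I_1$, I decompose by $e=\nu_p(t)$. On $\{\nu_p(t)=e\}$ one has $\nu_p(t^2-s)=2e$ and hence $w_p^*(t)=w_p(t)$. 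For even $e$ the contribution vanishes thanks to the standard identity $\sum_{d\in(\Z/p\Z)^*}\bigl(\tfrac{3d}{p}\bigr)=0$; for odd $e$, $w_p(t)=\bigl(\tfrac{-1}{p}\bigr)$ is constant and the integral over $\{\nu_p(t)=e\}$ equals $\bigl(\tfrac{-1}{p}\bigr)(p-1)/p^{e+1}$. Summing the finite geometric series over odd $e$ with $2e<\nu_p(s)$ yields the second displayed piece; the three sub-cases there simply record how many odd values of $e$ lie in the allowed range.

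The main work is $I_3$, which vanishes unless $\nu_p(s)=2m$ is even. In that case $\nu_p(t)=m$ and $\nu_p(t^2-s)=2m+k$ for some $k\geq 0$, so I partition the integration region into the sets $S_k$ of Lemma \ref{lem_meas}. A short case-check, splitting on the parity of $m+k$ and the residue of $k$ modulo $3$, yields
\begin{align*}
  w_p^*(t)|_{S_k}=\bigl(\tfrac{-1}{p}\bigr)^{m}\cdot\begin{cases}1&\text{if $m+k$ is odd or $k\equiv 0\Mod{3}$,}\\ \bigl(\tfrac{-3}{p}\bigr)&\text{otherwise;}\end{cases}
\end{align*}
the common prefactor $\bigl(\tfrac{-1}{p}\bigr)^{m}$ is precisely $\bigl(\tfrac{-1}{p}\bigr)^{j/2}$ and explains the leading Legendre symbol in the third displayed piece. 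The sub-case $\bigl(\tfrac{s_p}{p}\bigr)=-1$ is immediate because only $S_0$ has positive measure; the sub-case $\bigl(\tfrac{s_p}{p}\bigr)=1$ with $p\equiv 1\Mod{3}$ is also easy because $\bigl(\tfrac{-3}{p}\bigr)=1$ collapses the weighting so that $\sum_{k\geq 0}\mu(S_k)=(p-1)/p^{m+1}$.

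The hard part is the remaining sub-case $\bigl(\tfrac{s_p}{p}\bigr)=1$, $p\equiv 2\Mod{3}$, where $\bigl(\tfrac{-3}{p}\bigr)=-1$ and the sign of the integrand on $S_k$ is governed by the class of $k$ modulo $6$ (and the parity of $m$). Here I would partition $k\geq 1$ into its six residue classes modulo $6$ and evaluate each resulting geometric sub-series of ratio $p^{-6}$, using the measures from Lemma \ref{lem_meas}. The bookkeeping is tedious but mechanical; after combining the six partial sums (whose numerators are linear in $m$ via the $m$-parity), one recovers the rational function $(p^4+\tfrac{j}{2}p^3+p^2+\tfrac{j}{2})/((p+1)(p^4+p^2+1))$ appearing in the statement. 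Adding $I_1+I_2+I_3$ then produces the claimed closed form.
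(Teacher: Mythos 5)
Your proposal follows essentially the same route as the paper: the same three-way decomposition of $\Z_p$ according to the sign of $2\nu_p(t)-\nu_p(s)$, the same vanishing of the even-$\nu_p(t)$ contribution via $\sum_d\bigl(\tfrac{d}{p}\bigr)=0$ and geometric summation over odd $\nu_p(t)$, and the same partition of the diagonal case into the sets $S_k$ of Lemma \ref{lem_meas} with the value of $w_p^*$ on $S_k$ read off from the class of $k$ modulo $2$ and $3$ (your factored form $\bigl(\tfrac{-1}{p}\bigr)^{m}$ times $1$ or $\bigl(\tfrac{-3}{p}\bigr)$ is a correct restatement of the paper's case list). The only cosmetic difference is in the final sub-case, where the paper groups the series by parity (ratio $p^{-2}$) plus one class modulo $6$ rather than all six classes modulo $6$, but this is the same computation.
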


\begin{rem}
In the case where $\nu_p(s)=0$, such a hideous expression reduces to something quite nice; namely,
\begin{align*}
    \int_{\Z_p}w_p^*(t)dt
    =     \left\{\def\arraystretch{1.2}%
\begin{array}{@{}l@{\quad}l@{}}
1 & \mbox{if $\Big(\frac{s_p}{p}\Big)=-1,$}\\
\left\{\def\arraystretch{1.2}%
\begin{array}{@{}l@{\quad}l@{}}
1 & \mbox{if $p\equiv 1 \Mod{3}$,}\\
1-4\frac{p(p^2+1)}{(p+1)(p^4+p^2+1)} & \mbox{if $p\equiv 2 \Mod{3},$}
\end{array}\right.
& \mbox{if $\Big(\frac{s_p}{p}\Big)=1$}.
\end{array}\right.
\end{align*}
\end{rem}

\begin{proof}

By Proposition \ref{local_root_p>5},
\begin{align*}
    \int_{\Z_p}w_p^*(t)dt
    &=\int_{0\leq 2\nu_p(t)<\nu_p(s)}w_p^*(t)dt+\int_{0\leq \nu_p(s)<2\nu_p(t)}w_p^*(t)dt+\int_{0\leq \nu_p(s)=2\nu_p(t)}w_p^*(t)dt\\
    &=\int_{\substack{{0\leq 2\nu_p(t)<\nu_p(s)}\\{2|\nu_p(t)}}}-\Big(\frac{3t_p}{p}\Big)dt
    +\int_{\substack{{0\leq 2\nu_p(t)<\nu_p(s)}\\{2\nmid\nu_p(t)}}}\Big(\frac{-1}{p}\Big)dt\\
    &\qquad+\left\{\def\arraystretch{1.2}%
\begin{array}{@{}l@{\quad}l@{}}
\int_{0\leq \nu_p(s)<2\nu_p(t)}\Big(\frac{-1}{p}\Big)^\frac{
\nu_p(s)}{2}dt & \mbox{if $\nu_p(s)$ is even,}\\
\int_{0\leq \nu_p(s)<2\nu_p(t)}\Big(\frac{2}{p}\Big)dt & \mbox{if $\nu_p(s)$ is odd,}
\end{array}\right.\\
&\qquad\qquad+\sum_{k=0}^\infty \int_{\substack{{0\leq \nu_p(s)=2\nu_p(t)}\\{\nu_p(t^2-s)=\nu_p(s)+k}}}
w_p^*(t)dt,
\end{align*}
where the infinite sum is simply a partition of $\int_{0\leq \nu_p(s)_=2\nu_p(t)}w_p^*(t)dt$. We consider each line separately, noting that the third line is the most difficult to deal with.

We begin by partitioning the first two integrals as a sum over all $t\in\Z_p$ with $\nu_p(t)=2k$ and $\nu_p(t)=2k+1$, respectively, to obtain
\begin{align*}
    \int_{\substack{{0\leq 2\nu_p(t)<\nu_p(s)}\\{2|\nu_p(t)}}}-\Big(\frac{3t_p}{p}\Big)dt
    &=\sum_{0\leq k < \frac{\nu_p(s)}{4}} \int_{\nu_p(t)=2k} -\Big(\frac{3t_p}{p}\Big)dt\\
    &=\sum_{0\leq k < \frac{\nu_p(s)}{4}} -\Big(\frac{3}{p}\Big)\frac{1}{p^{2k+1}}\sum_{d\in(\Z/p\Z)^*}\Big(\frac{d}{p}\Big)
\end{align*}
and
\begin{align*}
    \int_{\substack{{0\leq 2\nu_p(t)<\nu_p(s)}\\{2\nmid\nu_p(t)}}}\Big(\frac{-1}{p}\Big)dt
    &=\sum_{0\leq k < \frac{\nu_p(s)-2}{4}} \int_{\nu_p(t)=2k+1} \Big(\frac{-1}{p}\Big)dt\\
    &=\sum_{0\leq k < \frac{\nu_p(s)-2}{4}}\Big(\frac{-1}{p}\Big)\mu(\{t\in\Z_p:\nu_p(t)=2k+1\}). 
\end{align*}
In the first case, $\int_{\substack{{0\leq 2\nu_p(t)<\nu_p(s)}\\{2|\nu_p(t)}}}-\Big(\frac{3t_p}{p}\Big)dt=0$: simply note that there are exactly $\frac{p-1}{2}$ squares and $\frac{p-1}{2}$ non-squares modulo $p$; i.e., 
\begin{align*}
    \sum_{d\in(\Z/p\Z)^*}\Big(\frac{d}{p}\Big)
=0
\end{align*}
In the second case, $\mu(\{t\in\Z_p:\nu_p(t)=2k+1\})=\frac{p-1}{p^{2k+2}}$, so that
\begin{align*}
     \int_{\substack{{0\leq 2\nu_p(t)<\nu_p(s)}\\{2\nmid\nu_p(t)}}}\Big(\frac{-1}{p}\Big)dt
     &=\sum_{0\leq k < \frac{\nu_p(s)-2}{4}}\Big(\frac{-1}{p}\Big)\frac{p-1}{p^{2k+2}}\\
     &=\Big(\frac{-1}{p}\Big)\frac{p-1}{p^2}\sum_{0\leq k < \frac{\nu_p(s)-2}{4}} (p^{-2})^k.
\end{align*}
Now, it is merely a matter of simplifying the geometric sum, taking into account the range of $k$: if $\nu_p(s)=0,1,2$, then the sum is empty and the integral vanishes; if $\nu_p(s)=3,4,5,6$, then the only contribution comes from $k=0$, so that the integral is equal to $\Big(\frac{-1}{p}\Big)\frac{p-1}{p^2}$; for the remaining cases, let $\alpha
=\lfloor{\frac{\nu_p(s)-2}{4}}\rfloor$ and note that
\begin{align*}
    \Big(\frac{-1}{p}\Big)\frac{p-1}{p^2}\sum_{0\leq k < \frac{\nu_p(s)-2}{4}} (p^{-2})^k
    &=\Big(\frac{-1}{p}\Big)\frac{p-1}{p^2}\frac{1}{1-p^{-2}}\cdot
    \left\{\def\arraystretch{1.2}%
\begin{array}{@{}l@{\quad}l@{}}
1-p^{-2\alpha} & \mbox{if $\nu_p(s)\equiv 2 \Mod{4}$,}\\
1-p^{-2\alpha-2} & \mbox{otherwise.}
\end{array}\right.
\end{align*}
We have the following:
\begin{align*}
    \int_{0\leq 2\nu_p(t)<\nu_p(s)}w_p^*(t)dt
    =\left\{\def\arraystretch{1.2}%
\begin{array}{@{}l@{\quad}l@{}}
0 & \mbox{if $\nu_p(s)=0,1,2$,}\\
\Big(\frac{-1}{p}\Big)\frac{p-1}{p^2} & \mbox{if $\nu_p(s)=3,4,5,6,$}\\
\Big(\frac{-1}{p}\Big)\frac{1}{p+1}\cdot
\left\{\def\arraystretch{1.2}%
\begin{array}{@{}l@{\quad}l@{}}
1-p^{-2\alpha} & \mbox{if $\nu_p(s)\equiv 2\Mod{4},$}\\
1-p^{-2\alpha-2} & \mbox{otherwise,}
\end{array}\right.
& \mbox{if $\nu_p(s)\geq 7$.}
\end{array}\right.
\end{align*}

For the integral over $\{t\in\Z_p:0\leq \nu_p(s)<2\nu_p(t)\}$, a quick calculation yields:
\begin{align*}
\int_{0\leq \nu_p(s)< 2\nu_p(t)}w_p^*(t)dt
&=
\left\{\def\arraystretch{1.2}%
\begin{array}{@{}l@{\quad}l@{}}
\int_{0\leq \nu_p(s)<2\nu_p(t)}\Big(\frac{-1}{p}\Big)^\frac{
\nu_p(s)}{2}dt & \mbox{if $\nu_p(s)$ is even,}\\
\int_{0\leq \nu_p(s)<2\nu_p(t)}\Big(\frac{2}{p}\Big)dt & \mbox{if $\nu_p(s)$ is odd,}
\end{array}\right.\\
&    =\left\{\def\arraystretch{1.2}%
\begin{array}{@{}l@{\quad}l@{}}
\Big(\frac{-1}{p}\Big)^\frac{
\nu_p(s)}{2}\frac{1}{p^{\frac{\nu_p(s)}{2}+1}} & \mbox{if $\nu_p(s)$ is even,}\\
\Big(\frac{2}{p}\Big)\frac{1}{p^{\frac{\nu_p(s)+1}{2}}} & \mbox{if $\nu_p(s)$ is odd.}
\end{array}\right.
\end{align*}

Finally, for the integral over $\{t\in\Z_p:0\leq 2\nu_p(t)=\nu_p(s)\}$, we assume $\nu_p(s)$ is even (otherwise, the domain of integration is empty and there is nothing to prove) and we have the following:
\begin{align*}
    \int_{0\leq 2\nu_p(t)=\nu_p(s)}w_p^*(t)dt
    &=\sum_{k=0}^\infty \int_{\substack{{0\leq 2\nu_p(t)=\nu_p(s)}\\{\nu_p(t^2-s)=\nu_p(s)+k}}}
w_p^*(t)dt,
\end{align*}
where, in this case,
\begin{align*}
    w_p^*(t)
    =\left\{\def\arraystretch{1.2}%
\begin{array}{@{}l@{\quad}l@{}}
\Big(\frac{-1}{p}\Big)^{k+1} & \mbox{if $k\equiv 1-\frac{3\nu_p(s)}{2}\Mod{2},$}\\
\Big(\frac{-3}{p}\Big)\Big(\frac{-1}{p}\Big)^{k} & \mbox{if $k\equiv -\frac{3\nu_p(s)}{2}\Mod{2},\not\equiv 0\Mod{3}$}\\
\Big(\frac{-1}{p}\Big)^k & \mbox{if $k\equiv -\frac{3\nu_p(s)}{2}\Mod{6}$}.
\end{array}\right.
\end{align*}
Moreover,
\begin{align*}
    \sum_{k=0}^\infty &\int_{\substack{{0\leq \nu_p(s)=2\nu_p(t)}\\{\nu_p(t^2-s)=\nu_p(s)+k}}}
w_p^*(t)dt\\
&=\sum_{k\equiv 1-\frac{3\nu_p(s)}{2} \Mod{2}}\Big(\frac{-1}{p}\Big)^{k+1}\mu(S_k)
+\sum_{\substack{{k\equiv \frac{-3\nu_p(s)}{2} \Mod{2}}\\{k\not\equiv 0\Mod{3}}}}\Big(\frac{-3}{p}\Big)\Big(\frac{-1}{p}\Big)^k\mu(S_k)
+\sum_{k\equiv \frac{-3\nu_p(s)}{2}\Mod{6}}\Big(\frac{-1}{p}\Big)^k\mu(S_k),
\end{align*}
with $\mu(S_k)$ as in Lemma \ref{lem_meas}. If we let $j\in\{0,2\}$ be such that $\nu_p(s)\equiv j \Mod{4}$, this becomes
\begin{align*}
\Big(\frac{-1}{p}\Big)^{\frac{j}{2}}\Biggr(\sum_{k\equiv 1-\frac{j}{2} \Mod{2}}\mu(S_k)
+\sum_{\substack{{k\equiv \frac{j}{2} \Mod{2}}\\{k\not\equiv 0\Mod{3}}}}\Big(\frac{-3}{p}\Big)\mu(S_k)
+\sum_{k\equiv \frac{3j}{2}\Mod{6}}\mu(S_k)\Biggr).
\end{align*}
In the case where $\Big(\frac{s_p}{p}\Big)=-1$,  
\begin{align*}
    \mu(S_k)=\left\{\def\arraystretch{1.2}%
\begin{array}{@{}l@{\quad}l@{}}
\frac{p-1}{p^{\frac{\nu_p(s)}{2}+1}} & \mbox{if $k=0$,}\\
0 & \mbox{if $k\geq 1$;}
\end{array}\right.
\end{align*}
in particular,
\begin{align*}
    \int_{0\leq \nu_p(s)=2\nu_p(t)}w_p^*(t)dt
=\Big(\frac{-1}{p}\Big)^{\frac{j}{2}}\frac{p-1}{p^{\frac{\nu_p(s)}{2}+1}},
\end{align*}
as the only contribution comes from $\mu(S_0)$. The case where $\Big(\frac{s_p}{p}\Big)=+1$ requires more work. We begin by recalling that 
\begin{align*}
     \mu(S_k)=\left\{\def\arraystretch{1.2}%
\begin{array}{@{}l@{\quad}l@{}}
\frac{p-3}{p^{\frac{\nu_p(s)}{2}+1}} & \mbox{if $k=0$,}\\
\frac{2}{p^{\frac{\nu_p(s)}{2}+k+1}} & \mbox{if $k\geq 1$}.
\end{array}\right.
\end{align*}
By separating $\mu(S_0)$ from $\mu(S_k)$ for $k\geq 1$, we obtain 
 \begin{align*}
&\int_{0\leq \nu_p(s)=2\nu_p(t)}w_p^*(t)dt\\
&=\Big(\frac{-1}{p}\Big)^{\frac{j}{2}}\Biggr(
\mu(S_{1-\frac{j}{2}})
+\mu(S_{\frac{3j}{2}})
+\Big(\frac{-3}{p}\Big)\Biggr(\mu(S_{\frac{j}{2}})-\mu(S_{\frac{3j}{2}})\Biggr)\\
&\qquad \qquad+\sum_{k=1}^\infty\mu(S_{2k+1-\frac{j}{2}})
+\sum_{k= 1}^\infty\mu(S_{6k+\frac{3j}{2}})
+\Big(\frac{-3}{p}\Big)
\Biggr(\sum_{k=1}^\infty\mu(S_{2k+\frac{j}{2}})-\sum_{k= 1}^\infty\mu(S_{6k+\frac{3j}{2}})\Biggr)\Biggr),
\end{align*}
where 
\begin{align*}
    \sum_{k= 0}^\infty \mu(S_{2k+\frac{j}{2}})-\sum_{k=0}^\infty \mu(S_{6k+\frac{3j}{2}})=\sum_{\substack{{k\equiv \frac{j}{2} \Mod{2}}\\{k\not\equiv 0\Mod{3}}}}\mu(S_k).
\end{align*}
For $k\geq 1$, $\mu(S_k)=\frac{2(p-1)}{p^{\frac{\nu_p(s)}{2}+k+1}}$ and it is easy to see that 
\begin{align*}
    &\int_{0\leq \nu_p(s)=2\nu_p(t)}w_p^*(t)dt\\
    &=\Big(\frac{-1}{p}\Big)^{\frac{j}{2}}\Biggr(
\mu(S_{1-\frac{j}{2}})
+\mu(S_{\frac{3j}{2}})
+\Big(\frac{-3}{p}\Big)\Biggr(\mu(S_{\frac{j}{2}})-\mu(S_{\frac{3j}{2}})\Biggr)\\
&\qquad\qquad+\frac{2(p-1)}{p^{\frac{\nu_p(s)}{2}+1}}\Biggr(
\frac{1}{p^{1-\frac{j}{2}}(p^2-1)}
+\frac{1}{p^\frac{3j}{2}(p^6-1)}
+\Big(\frac{-3}{p}\Big)
\Biggr(
\frac{1}{p^\frac{j}{2}(p^2-1)}
-\frac{1}{p^\frac{3j}{2}(p^6-1)}
\Biggr)\Biggr)\Biggr).
\end{align*}
If $p\equiv 1 \Mod{3}$, then $\Big(\frac{-3}{p}\Big)=1$ and we get that
\begin{align*}
      \int_{0\leq \nu_p(s)=2\nu_p(t)}w_p^*(t)dt
      &=\Big(\frac{-1}{p}\Big)^{\frac{j}{2}}\Biggr(
\mu(S_{1-\frac{j}{2}})+
\mu(S_{\frac{j}{2}})
+\frac{2(p-1)}{p^{\frac{\nu_p(s)}{2}+1}}\cdot
\frac{p^\frac{j}{2}+p^{1-\frac{j}{2}}}{p(p^2-1)}
\Biggr).
\end{align*}
Upon further simplification,
\begin{align*}
   \int_{0\leq \nu_p(s)=2\nu_p(t)}w_p^*(t)dt
   &=\Big(\frac{-1}{p}\Big)^{\frac{j}{2}}\frac{p-1}{p^{\frac{\nu_p(s)}{2}+1}}.
\end{align*}
On the other hand, for $p\equiv 2 \Mod{3}$, $\Big(\frac{-3}{p}\Big)=-1$; in particular,
\begin{align*}
    \int_{0\leq \nu_p(s)=2\nu_p(t)}w_p^*(t)dt
    &=\Big(\frac{-1}{p}\Big)^{\frac{j}{2}}\Biggr(
\mu(S_{1-\frac{j}{2}})
+2\mu(S_{\frac{3j}{2}})
-\mu(S_{\frac{j}{2}})
+\frac{2(p-1)}{p^{\frac{\nu_p(s)}{2}+1}}\Biggr(
\frac{p^\frac{j}{2}-p^{1-\frac{j}{2}}}{p(p^2-1)}
+\frac{2}{p^\frac{3j}{2}(p^6-1)}
\Biggr)\Biggr).
\end{align*}
Simplifying once again,
\begin{align*}
    \int_{0\leq \nu_p(s)=2\nu_p(t)}w_p^*(t)dt=
\Big(\frac{-1}{p}\Big)^{\frac{j}{2}}\frac{1}{p^{\frac{\nu_p(s)}{2}+1}}\Biggr(
p-(2j+1)-4(-1)^\frac{j}{2}\frac{p^4+\frac{j}{2}p^3+p^2+\frac{j}{2}}{(p+1)(p^4+p^2+1)}\Biggr),
\end{align*}
which is the desired result.

To complete our proof, it suffices to sum our results, recalling that 
\begin{align*}
\int_{\Z_p}w_p^*(t)dt=\Biggr(\int_{0\leq 2\nu_p(t)<\nu_p(s)}+\int_{0\leq \nu_p(s)<2\nu_p(t)}+\int_{0\leq \nu_p(s)=2\nu_p(t)}\Biggr) w_p^*(t)dt.
\end{align*}
\end{proof}

\section{Computing $\int_{\Z_3}w_3^*(t)dt$}

We begin by recalling that $w_3^*(t)=(-1)^{\nu_3(t^2-s)}w_3(t)$, with $w_3(t)$ as in Appendix A of \cite{BDD}. From here, we consider the usual cases: $0\leq \nu_3(s)<2\nu_3(t),0\leq 2\nu_3(t)<\nu_3(s),0\leq 2\nu_3(t)=\nu_3(s)$.

\subsection{$0\leq \nu_3(s)< 2\nu_3(t)$}

If $0\leq \nu_3(s)<2\nu_3(t)$, then $\nu_3(t^2-s)=\nu_3(s)$ and $w_3^*(t)=(-1)^{\nu_3(s)}w_3(t)$. Since $w_3(t)$ depends only on $\nu_3(t)$ and $t_3 \Mod{3}$ (and possibly on $\nu_3(s)$ and $s_3$), $w_3(t)$ is a $3$-uniformly locally constant multiplicative function with uniformity constant $\eta=1$. Therefore,
\begin{align*}
    \int_{0\leq \nu_3(s)<2\nu_3(t)}w_3^*(t)dt
    &=(-1)^{\nu_3(s)}\sum_{e>\frac{\nu_3(s)}{2}}\Biggr(\frac{1}{3^{e+1}}\sum_{d\in(\Z/3\Z)^*} w_3(d\cdot 3^e)\Biggr)
\end{align*}
and it is not hard to show that

\begin{align*}
    \int_{0\leq \nu_3(s) <2\nu_3(t)} w_3^*(t)dt =
    \left\{\def\arraystretch{1.2}%
\begin{array}{@{}l@{\quad}l@{}}
\frac{1}{3^{\frac{\nu_3(s)}{2}+2}} & \mbox{if $\nu_3(s)\equiv 0 \Mod{2}$,}\\
\frac{1-2\chi_3(s_3)}{3^{\frac{\nu_3(s)+3}{2}}} & \mbox{if $\nu_3(s)\equiv 1 \Mod{4},$}\\
\frac{-1}{3^{\frac{\nu_3(s)+1}{2}}} & \mbox{if $\nu_3(s)\equiv 3\Mod{4}$},
\end{array}\right.
\end{align*}
where $\chi_3$ is the non-principal character modulo $3$.

\subsection{$0\leq 2\nu_3(t)<\nu_3(s)$}

If $0\leq 2\nu_3(t)<\nu_3(s)$, then $\nu_3(t^2-s)=2\nu_3(t)$ and $w_3^*(t)=w_3(t)$. Once again, $w_3(t)$ is a $3$-uniformly locally constant multiplicative function with uniformity constant $\eta=1$.
We begin by partitioning the integral $\int_{0\leq 2\nu_3(t) <\nu_3(s)}w_3^*(t)dt$ according to the cases in Appendix A of \cite{BDD}:
\begin{align*}
    &\int_{0\leq 2\nu_3(t) <\nu_3(s)}w_3^*(t)dt\\
    &=
    \int_{\substack{{\nu_3(s)-2\nu_3(t)=1}\\{2|\nu_3(t)}}}w_3(t)dt
    +\int_{\substack{{\nu_3(s)-2\nu_3(t)=2}\\{2|\nu_3(t)}}}w_3(t)dt
    +\int_{\substack{{\nu_3(s)-2\nu_3(t)\geq 3}\\{2|\nu_3(t)}}}w_3(t)dt\\
    &\qquad \qquad+\int_{\substack{{\nu_3(s)-2\nu_3(t)=1}\\{2\nmid \nu_3(t)}}}w_3(t)dt
    +\int_{\substack{{\nu_3(s)-2\nu_3(t)=2}\\{2\nmid \nu_3(t)}}}w_3(t)dt
    +\int_{\substack{{\nu_3(s)-2\nu_3(t)=3}\\{2\nmid\nu_3(t)}}}w_3(t)dt
    +\int_{\substack{{\nu_3(s)-2\nu_3(t)\geq 4}\\{2\nmid\nu_3(t)}}}w_3(t)dt,
\end{align*}

From Appendix A in \cite{BDD}, 
\begin{align*}
    \int_{\substack{{\nu_3(s)-2\nu_3(t)=2}\\{2|\nu_3(t)}}}w_3(t)dt,
    \int_{\substack{{\nu_3(s)-2\nu_3(t)=2}\\{2\nmid \nu_3(t)}}}w_3(t)dt,
    \int_{\substack{{\nu_3(s)-2\nu_3(t)\geq 4}\\{2\nmid\nu_3(t)}}}w_3(t)dt=0,
\end{align*}
whereas
\begin{align*}
    &\int_{\substack{{\nu_3(s)-2\nu_3(t)=1}\\{2|\nu_3(t)}}}w_3(t)dt
=\left\{\def\arraystretch{1.2}%
\begin{array}{@{}l@{\quad}l@{}}
\frac{2}{3^{\frac{\nu_3(s)+1}{2}}} & \mbox{if $\nu_3(s)\equiv 1 \Mod{4}$ and $\nu_3(s)\geq 1$,}\\
0 & \mbox{otherwise},
\end{array}\right.\\
    &\int_{\substack{{\nu_3(s)-2\nu_3(t)\geq 3}\\{2|\nu_3(t)}}}w_3(t)dt
=\sum_{\substack{{3\leq k \leq \nu_3(s)}\\{k\equiv \nu_3(s) \Mod{4}}}} \frac{-2}{3^{\frac{\nu_3(s)-k}{2}+1}}    
=\left\{\def\arraystretch{1.2}%
\begin{array}{@{}l@{\quad}l@{}}
\frac{1}{4}\Biggr(\frac{3^{1-2\lfloor \frac{j}{3} \rfloor}}{3^{\frac{\nu_3(s)-j}{2}}}-3\Biggr) & \mbox{if $\nu_3(s)\geq 3$,}\\
0 & \mbox{otherwise},
\end{array}\right.\\
    &\int_{\substack{{\nu_3(s)-2\nu_3(t)=1}\\{2\nmid \nu_3(t)}}}w_3(t)dt
=\left\{\def\arraystretch{1.2}%
\begin{array}{@{}l@{\quad}l@{}}
\frac{2\chi_3(s_3)}{3^{\frac{\nu_3(s)+1}{2}}} & \mbox{if $\nu_3(s) \equiv 3 \Mod{4}$ and $\nu_3(s)\geq 3$,}\\
0 & \mbox{otherwise},
\end{array}\right.\\
    &\int_{\substack{{\nu_3(s)-2\nu_3(t)=3}\\{2\nmid\nu_3(t)}}}w_3(t)dt
=\left\{\def\arraystretch{1.2}%
\begin{array}{@{}l@{\quad}l@{}}
\frac{2}{3^{\frac{\nu_3(s)-1}{2}}} & \mbox{if $\nu_3(s) \equiv 1 \Mod{4}$ and $\nu_3(s)\geq 5$,}\\
0 & \mbox{otherwise},
\end{array}\right.
\end{align*}
where $j\in \{0,1,2,3\}$ is such that $\nu_3(s)\equiv j \Mod{4}$ and where $\chi_3$ is the non-principal character modulo $3$.

Summing the individual contributions, 
\begin{align*}
    \int_{0\leq 2\nu_3(t)< \nu_3(s)}w_3^*(t)dt
    =
\left\{\def\arraystretch{1.2}%
\begin{array}{@{}l@{\quad}l@{}}
0 & \mbox{if $\nu_3(s)=0$,}\\
\frac{2}{3} & \mbox{if $\nu_3(s)=1$,}\\
0 & \mbox{if $\nu_3(s)=2$,}\\
\frac{2(\chi_3(s_3)-3)}{9}& \mbox{if $\nu_3(s)=3,$}\\
\frac{-2}{3} & \mbox{if $\nu_3(s)=4$,}\\
\frac{1}{4}\Biggr(\frac{3^{1-2\lfloor \frac{j}{3} \rfloor}}{3^{\frac{\nu_3(s)-j}{2}}}-3\Biggr)
+
\left\{\def\arraystretch{1.2}%
\begin{array}{@{}l@{\quad}l@{}}
 0 & \mbox{if $\nu_3(s)\equiv 0 \Mod{2}$,}\\
 \frac{8}{3^{\frac{\nu_3(s)+1}{2}}}& \mbox{if $\nu_3(s)\equiv 1 \Mod{4},$}\\
 \frac{2\chi_3(s_3)}{3^{\frac{\nu_3(s)+1}{2}}}& \mbox{if $\nu_3(s)\equiv 3 \Mod{4},$}
\end{array}\right.
& \mbox{if $\nu_3(s)\geq 5$.}
\end{array}\right.
\end{align*}

\subsection{$0\leq 2\nu_3(t)=\nu_3(s)$}

For $0\leq 2\nu_3(t)=\nu_3(s)$, we write $\nu_3(t^2-s)=\nu_3(s)+k$ with $k\geq 0$, so that
\begin{align*}
    \int_{2\nu_3(t)=\nu_3(s)}w_3^*(t)dt
    &=\sum_{k=0}^\infty (-1)^k \int_{\substack{{0\leq 2\nu_3(t)=\nu_3(s)}\\{\nu_3(t^2-s)=\nu_3(s)+k}}}w_3(t)dt.
\end{align*}
By splitting the contributions from $k=0, k\not \equiv0  \Mod{3}$, and $k\equiv 0 \Mod{3} (k\neq 0)$, we write
\begin{align*}
    &\int_{2\nu_3(t)=\nu_3(s)}w_3^*(t)dt\\
    &=
    \int_{\substack{{0\leq 2\nu_3(t)=\nu_3(s)}\\{\nu_3(t^2-s)=\nu_3(s)}}}w_3(t)dt
    +\sum_{\substack{{k\equiv 0 \Mod{3}}\\{k\neq 0}}} (-1)^k\int_{\substack{{0\leq 2\nu_3(t)=\nu_3(s)}\\{\nu_3(t^2-s)=\nu_3(s)+k}}}w_3(t)dt
    +\sum_{k\not \equiv 0 \Mod{3}}(-1)^k \int_{\substack{{0\leq 2\nu_3(t)=\nu_3(s)}\\{\nu_3(t^2-s)=\nu_3(s)+k}}}w_3(t)dt.
\end{align*}
Notice that if $2\nu_3(t)=\nu_3(s)$, then $\nu_3(t^2-s)=\nu_3(s)+k$ iff $t_3^2-s_3\in 3^k\Z_3^*$; in other words, $ \nu_3(t^2-s)=\nu_3(s)+k$ iff
\begin{align*}
   \left\{\def\arraystretch{1.2}%
\begin{array}{@{}l@{\quad}l@{}}
t_3^2\not \equiv s_3 \Mod{3} & \mbox{if $k=0$,}\\
t_3^2\equiv s_3 \Mod{3^k},\not \equiv s_3 \Mod{3^{k+1}} & \mbox{if $k\geq 1$.}
\end{array}\right.
\end{align*}

Since $w_3^*(t)=(-1)^{\nu_3(t^2-s)}w_3(t)$ and since $w_3(t)$ depends only on $t_3(t_3^2-s_3)_3 \Mod{9}$ (and possibly on $s_3$ and $\nu_3(s)$), we have that
\begin{align*}
    \int_{\substack{{0\leq 2\nu_3(t)=\nu_3(s)}\\{\nu_3(t^2-s)=\nu_3(s)+k}}}w_3(t)dt
    =\frac{1}{3^{\frac{\nu_3(s)}{2}+k+2}}\sum_{\substack{{d\in(\Z/3^{k+2}\Z)^*}\\{d^2\equiv s_3\Mod{3^k}}\\{d^2\not\equiv s_3\Mod{3^{k+1}}}}}w_3(d\cdot 3^\frac{\nu_3(s)}{2}).
\end{align*}
We consider two cases: $s_3\equiv 1 \Mod{3}$ and $s_3\equiv 2 \Mod{3}$.

In the case where $s_3\equiv 2\Mod{3}$, $s_3$ is not a square modulo $3$; in particular,
\begin{align*}
    \sum_{\substack{{d\in(\Z/3^{k+2}\Z)^*}\\{d^2\equiv s_3\Mod{3^k}}\\{d^2\not\equiv s_3\Mod{3^{k+1}}}}}w_3(d\cdot 3^\frac{\nu_3(s)}{2})=0
\end{align*}
for all $k\geq 1 $ (as the sums are empty). Therefore, if $s_3\equiv 2 \Mod{3}$,
\begin{align*}
     \int_{2\nu_3(t)=\nu_3(s)}w_3^*(t)dt
     &=\frac{1}{3^{\frac{\nu_3(s)}{2}+2}}\sum_{\substack{{d\in(\Z/3^{2}\Z)^*}\\{d^2\not\equiv 2\Mod{3}}}}w_3(d\cdot 3^\frac{\nu_3(s)}{2})\\
     &=\frac{1}{3^{\frac{\nu_3(s)}{2}+2}}\sum_{\substack{{d\in(\Z/3^{2}\Z)^*}}}w_3(d\cdot 3^\frac{\nu_3(s)}{2}).
\end{align*}
In this case, $w_3(d\cdot 3^\frac{\nu_3(s)}{2})=1$ iff $s_3d\not \equiv 2,4 \Mod{9}$. Since $s_3$ is invertible modulo $9$, as $d$ varies over $(\Z/9\Z)^*$, so does $s_3d$; i.e.,
\begin{align*}
    \sum_{\substack{{d\in(\Z/3^{2}\Z)^*}}}w_3(d\cdot 3^\frac{\nu_3(s)}{2})=2
\end{align*}
with
\begin{align*}
    \int_{2\nu_3(t)=\nu_3(s)}w_3^*(t)dt=\frac{2}{3^{\frac{\nu_3(s)}{2}+2}} \mbox{ if $s_3\equiv 2 \Mod{3}$}.
\end{align*}

In the case where $s_3\equiv 1 \Mod{3}$, let $\pm\sqrt s_3$ denote the square roots of $s_3$ in $\Z_3$. Since $s_3$ is a square modulo $3$, there exist exactly two $d$ in $(\Z/3^k\Z)^*\cong (\Z_3/3^k\Z_3)^*$ such that $d^2\equiv s_3 \Mod{3^k}$ (namely, $\pm \sqrt s_3 + 3^k\Z_3$). Each such solution lifts in exactly three ways to solutions of $x^2\equiv s_3 \Mod{3^k}$ in $(\Z/3^{k+1}\Z)^*$; namely, $\pm(\sqrt s_3+\alpha \cdot 3^k)+3^{k+1}\Z_3$ with $\alpha \in \{0,1,2\}$. The condition that $x^2\not\equiv s_3 \Mod{3^{k+1}}$ tells us to throw away two of our solutions (those corresponding to $\alpha=0$). From here, we lift our solutions to $(\Z/3^{k+2}\Z)^*$ by writing $\pm(\sqrt s_3+\alpha \cdot 3^k+\beta \cdot 3^{k+1})+3^{k+2}\Z_3$ with $\beta \in \{0,1,2\}$. By working with the isomorphism $(\Z/3^{k+2}\Z)^*\cong (\Z_3/3^{k+2}\Z_3)^*$ and choosing an appropriate representative for  $d$, we have that there are exactly 12 solutions to $d\in (\Z/3^{k+2}\Z)^*$ such that $d^2\equiv s_3 \Mod{3^k},\not\equiv s_3 \Mod{3^{k+1}}$; namely,
\begin{align*}
    d=\pm(\sqrt s_3 + \alpha \cdot 3^k + \beta \cdot 3^{k+1}) + 3^{k+2}\Z_3,
\end{align*}
with $\alpha \in \{1,2\},\beta \in \{0,1,2\}$. Now, the value of $w_3(d\cdot 3^\frac{\nu_3(s)}{2})$ depends only on the value of $d(d^2-s_3)_3$ modulo $9$, with $d$ as above (in the case where $k\equiv 0 \Mod{3}$, the value of $w_3(d\cdot 3^\frac{\nu_3(s)}{2})$ depends only on $d(d^2-s_3)_3$ modulo $3$). But, if $d=\pm(\sqrt s_3 + \alpha \cdot 3^k + \beta \cdot 3^{k+1}) + 3^{k+2}\Z_3$, then, for $k\geq 1$,
\begin{align*}
d(d^2-s_3)_3
&\equiv \left\{\def\arraystretch{1.2}%
\begin{array}{@{}l@{\quad}l@{}}
\pm 2 s_3 (\alpha + 3\beta) \Mod{9} & \mbox{if $k\equiv 0 \Mod{3}$,}\\
\pm 2 s_3 \alpha \Mod{3} & \mbox{if $k\not \equiv 0 \Mod{3}$.}
\end{array}\right.
\end{align*}
From here, it is easy to see that
\begin{align*}
    \frac{1}{3^{\frac{\nu_3(s)}{2}+k+2}}\sum_{\substack{{d\in(\Z/3^{k+2}\Z)^*}\\{d^2\equiv s_3 \Mod{3^k}}\\{d^2\not\equiv s_3\Mod{3^{k+1}}}}}w_3(d\cdot 3^\frac{\nu_3(s)}{2})
    =\left\{\def\arraystretch{1.2}%
\begin{array}{@{}l@{\quad}l@{}}
0 & \mbox{if $k\not \equiv 0 \Mod{3}$,}\\
\frac{4}{3^{\frac{\nu_3(s)}{2}+k+2}} & \mbox{otherwise,}
\end{array}\right.
\end{align*}
whenever $k\geq 1$. When $k=0$,
\begin{align*}
     \int_{\substack{{0\leq 2\nu_3(t)=\nu_3(s)}\\{\nu_3(t^2-s)=\nu_3(s)}}}w_3(t)dt=0,
\end{align*}
as the sum
\begin{align*}
    \sum_{\substack{{d\in (\Z/9\Z)^*}\\{d^2\not \equiv s_3 \Mod{3}}}}w_3(d\cdot 3^\frac{\nu_3(s)}{2})
\end{align*}
is empty (simply note that $d^2\equiv 1 \Mod{3}$ for all $d\in (\Z/9\Z)^*$). Putting all of this together, 
\begin{align*}
    \int_{2\nu_3(t)=\nu_3(s)}w_3^*(t)dt
    &=\sum_{\substack{{k\equiv 0 \Mod{3}}\\{k\neq 0}}} (-1)^k\frac{4}{3^{\frac{\nu_3(s)}{2}+k+2}} 
    =\frac{-1}{7}\cdot\frac{1}{3^{\frac{\nu_3(s)}{2}+2}};
\end{align*}
that is,
\begin{align*}
      \int_{0\leq 2\nu_3(t)=\nu_3(s)}w_3^*(t)dt=
      \left\{\def\arraystretch{1.2}%
\begin{array}{@{}l@{\quad}l@{}}
\left\{\def\arraystretch{1.2}%
\begin{array}{@{}l@{\quad}l@{}}
\frac{2}{3^{\frac{\nu_3(s)}{2}+2}} & \mbox{if $s_3\equiv 2 \Mod{3}$,}\\
\frac{-1}{7}\cdot\frac{1}{3^{\frac{\nu_3(s)}{2}+2}} & \mbox{if $s_3\equiv 1 \Mod{3}$},
\end{array}\right.
& \mbox{if $\nu_3(s)\equiv 0 \Mod{2}$,}\\
0 & \mbox{if $\nu_3(s)\equiv 1 \Mod{2}$}.
\end{array}\right.
\end{align*}

Hence,
\begin{prop}
\label{p=3}
\begin{align*}
     \int_{\Z_3}w_3^*(t)dt
&=\left\{\def\arraystretch{1.2}%
\begin{array}{@{}l@{\quad}l@{}}
\frac{1}{3^{\frac{\nu_3(s)}{2}+2}} & \mbox{if $\nu_3(s)\equiv 0 \Mod{2}$,}\\
\frac{1-2\chi_3(s_3)}{3^{\frac{\nu_3(s)+3}{2}}} & \mbox{if $\nu_3(s)\equiv 1 \Mod{4}$,}\\
\frac{-1}{3^{\frac{\nu_3(s)+1}{2}}} & \mbox{if $\nu_3(s)\equiv 3\Mod{4}$},
\end{array}\right.\\
&+
\left\{\def\arraystretch{1.2}%
\begin{array}{@{}l@{\quad}l@{}}
0 & \mbox{if $\nu_3(s)=0$,}\\
\frac{2}{3} & \mbox{if $\nu_3(s)=1,$}\\
0 & \mbox{if $\nu_3(s)=2$,}\\
\frac{2(\chi_3(s_3)-3)}{9}& \mbox{if $\nu_3(s)=3,$}\\
\frac{-2}{3} & \mbox{if $\nu_3(s)=4$,}\\
\frac{1}{4}\Biggr(\frac{3^{1-2\lfloor \frac{j}{3} \rfloor}}{3^{\frac{\nu_3(s)-j}{2}}}-3\Biggr)
+
\left\{\def\arraystretch{1.2}%
\begin{array}{@{}l@{\quad}l@{}}
 0 & \mbox{if $\nu_3(s)\equiv 0 \Mod{2}$,}\\
 \frac{8}{3^{\frac{\nu_3(s)+1}{2}}}& \mbox{if $\nu_3(s)\equiv 1 \Mod{4},$}\\
 \frac{2\chi_3(s_3)}{3^{\frac{\nu_3(s)+1}{2}}}& \mbox{if $\nu_3(s)\equiv 3 \Mod{4},$}
\end{array}\right.
& \mbox{if $\nu_3(s)\geq 5$,}
\end{array}\right.\\
&+\left\{\def\arraystretch{1.2}%
\begin{array}{@{}l@{\quad}l@{}}
\left\{\def\arraystretch{1.2}%
\begin{array}{@{}l@{\quad}l@{}}
\frac{2}{3^{\frac{\nu_3(s)}{2}+2}} & \mbox{if $s_3\equiv 2 \Mod{3}$,}\\
\frac{-1}{7}\cdot\frac{1}{3^{\frac{\nu_3(s)}{2}+2}} & \mbox{if $s_3\equiv 1 \Mod{3}$},
\end{array}\right.
& \mbox{if $\nu_3(s)\equiv 0 \Mod{2}$,}\\
0 & \mbox{if $\nu_3(s)\equiv 1 \Mod{2}$},
\end{array}\right.
\end{align*}
where $j\in \{0,1,2,3\}$ is such that $\nu_3(s)\equiv j \Mod{4}$ and where $\chi_3$ is the non-principal character modulo $3$.
\end{prop}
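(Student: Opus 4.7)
My plan is to decompose $\Z_3$ (up to a measure-zero set) into three disjoint regions based on the relationship between $\nu_3(t)$ and $\nu_3(s)$: $R_1 = \{t : 0 \leq \nu_3(s) < 2\nu_3(t)\}$, $R_2 = \{t : 0 \leq 2\nu_3(t) < \nu_3(s)\}$, and $R_3 = \{t : 0 \leq 2\nu_3(t) = \nu_3(s)\}$. These match the three large cases used to tabulate $w_3(t)$ in Appendix A of \cite{BDD}, and the three summands in the stated formula are precisely the integrals over $R_1$, $R_2$, and $R_3$ in that order. The proof therefore reduces to computing each of these integrals and summing them.

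On $R_1$ one has $\nu_3(t^2 - s) = \nu_3(s)$ and hence $w_3^*(t) = (-1)^{\nu_3(s)} w_3(t)$. Since $w_3$ there is a $3$-uniformly locally constant multiplicative function with uniformity constant $\eta = 1$, each slice $\{\nu_3(t) = e\}$ contributes $3^{-(e+1)} \sum_{d \in (\Z/3\Z)^*} w_3(d \cdot 3^e)$, which I read off from the tables; summing over $e > \nu_3(s)/2$ with casework on the parity of $\nu_3(s)$ (and its residue modulo $4$ in the odd case) produces the first summand. On $R_2$ one has $w_3^*(t) = w_3(t)$, and the natural partition is by the parity of $\nu_3(t)$ together with $\nu_3(s) - 2\nu_3(t) \in \{1, 2, \geq 3\}$ (or $\{1, 2, 3, \geq 4\}$ in the odd-$\nu_3(t)$ case); most of the resulting seven pieces vanish by direct inspection of the tables, and the surviving geometric sums assemble into the second summand.

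The main obstacle is $R_3$, where $\nu_3(s)$ is forced to be even. Writing $\nu_3(t^2 - s) = \nu_3(s) + k$ for $k \geq 0$ gives $w_3^*(t) = (-1)^k w_3(t)$, and I stratify by $k$. For $k \geq 1$ the condition $t_3^2 \equiv s_3 \pmod{3^k}$ but not mod $3^{k+1}$ is vacuous unless $s_3 \equiv 1 \pmod 3$, so when $s_3 \equiv 2 \pmod 3$ only the slice $k = 0$ contributes and a short count in $(\Z/9\Z)^*$ gives $2 \cdot 3^{-\nu_3(s)/2 - 2}$. When $s_3 \equiv 1 \pmod 3$, Hensel's lemma furnishes two roots $\pm\sqrt{s_3} \in \Z_3$, and I parametrize the $12$ relevant residues modulo $3^{k+2}$ as $\pm(\sqrt{s_3} + \alpha \cdot 3^k + \beta \cdot 3^{k+1})$ with $\alpha \in \{1,2\}$, $\beta \in \{0,1,2\}$. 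A direct computation shows that $d(d^2 - s_3)_3 \bmod 9$—which is what $w_3$ actually depends on—reduces to $\pm 2 s_3 (\alpha + 3\beta)$ when $k \equiv 0 \pmod 3$ and to $\pm 2 s_3 \alpha$ otherwise; only the $k \equiv 0 \pmod 3$ slices produce a non-zero sum, and the resulting geometric series in $3^{-3}$ telescopes to $-1/(7 \cdot 3^{\nu_3(s)/2 + 2})$.

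The hardest part is the bookkeeping in $R_3$—simultaneously tracking how $w_3$ is read off the Kodaira tables through $t(t^2 - s)_3 \bmod 9$ and lifting roots via Hensel while keeping the count of residues correct. Adding the contributions from $R_1$, $R_2$, and $R_3$ then gives the stated formula.
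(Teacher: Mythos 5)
Your proposal is correct and follows essentially the same route as the paper: the same three-region decomposition by comparing $2\nu_3(t)$ with $\nu_3(s)$, the same use of $3$-uniformly locally constant multiplicative functions to reduce each slice to a finite sum over residues, and the same Hensel-lifting parametrization $\pm(\sqrt{s_3}+\alpha\cdot 3^k+\beta\cdot 3^{k+1})$ with the reduction of $d(d^2-s_3)_3$ modulo $9$ in the hardest case. The final geometric series and all stated contributions agree with the paper's computation.
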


\section{Computing $\int_{\Z_2}w_2^*(t)dt$}

We begin by recalling that $w_2^*(t)\in\{\pm 1\}$ with $w_2^*(t)\equiv (t^2-s)_2w_2(t) \Mod{4}$. We consider the usual cases: $0\leq \nu_2(s)<2\nu_2(t),0\leq 2\nu_2(t)<\nu_2(s)$, and $0\leq 2\nu_2(t)=\nu_2(s)$.

\subsection{$0\leq \nu_2(s)<2\nu_2(t)$}

If $0\leq \nu_2(s)<2\nu_2(t)$, then $\nu_2(t^2-s)=\nu_2(s)$ and $2\nu_2(t)=\nu_2(s)+k$, for some $k\geq 1$; in particular,
\begin{align*}
(t^2-s)_2
&=(t^2-s)2^{-\nu_2(s)}\\
&=t_2^2\cdot 2^k-s_2\\
&\equiv 
\left\{\def\arraystretch{1.2}%
\begin{array}{@{}l@{\quad}l@{}}
s_2 \Mod{4} & \mbox{if $k=1,$}\\
-s_2 \Mod{4} & \mbox{if $k\geq 2$}.
\end{array}\right.
\end{align*}
Therefore,
\begin{align*}
\int_{2\nu_2(t)-\nu_2(s)=k} w_2^*(t)dt
&=\int_{2\nu_2(t)-\nu_2(s)=k} w_2(t)dt \cdot
\left\{\def\arraystretch{1.2}%
\begin{array}{@{}l@{\quad}l@{}}
\chi_4(s_2) & \mbox{if $k=1$,}\\
-\chi_4(s_2) & \mbox{if $k\geq 2$},
\end{array}\right.
\end{align*}
where $\chi_4$ is the non-principal character modulo $4$.

Since $w_2(t)$ depends only on $\nu_2(t)$ and $t_2 \Mod{4}$ (and possibly on $\nu_2(s)$ and $s_2$), we have that $w_2(t)$ is a $2$-uniformly locally constant multiplicative function with uniformity constant $\eta=2$; i.e.,
\begin{align*}
\int_{2\nu_2(t)-\nu_2(s)=k} w_2(t)dt
&=\frac{1}{2^{\frac{\nu_2(s)+k}{2}+2}}\sum_{d\in (\Z/4\Z)^*} w_2(d\cdot 2^{\frac{\nu_2(s)+k}{2}}).
\end{align*}
Putting all of this together,
\begin{align*}
\int_{0\leq \nu_2(s)<2\nu_2(t)}w_2^*(t)dt
&=\chi_4(s_2)\cdot
\left\{\def\arraystretch{1.2}%
\begin{array}{@{}l@{\quad}l@{}}
-\sum_{k=1}^\infty \int_{\nu_2(t)=\frac{\nu_2(s)}{2}+k} w_2(t)dt & \mbox{if $\nu_2(s)\equiv 0 \Mod{2},$}\\
\int_{\nu_2(t)=\frac{\nu_2(s)+1}{2}}w_2(t)dt-\sum_{k=1}^\infty \int_{\nu_2(t)=\frac{\nu_2(s)+1}{2}+k} w_2(t)dt & \mbox{if $\nu_2(s)\equiv 1 \Mod{2}$},
\end{array}\right.
\end{align*}
where $\chi_4$ is the non-principal character modulo $4$ and with $\int_{\nu_2(t)=e}w_2(t)dt$ as above.

From here, a tedious, but straightforward, computation yields:
\begin{align*}
    \int_{0\leq \nu_2(s)<2\nu_2(t)}w_2^*(t)dt
    =\left\{\def\arraystretch{1.2}%
\begin{array}{@{}l@{\quad}l@{}}
0 & \mbox{if $\nu_2(s)\equiv 0 \Mod{2},$}\\
\frac{(-1)^\frac{\nu_2(s)-1}{2}}{2^\frac{\nu_2(s)+3}{2}}
\cdot \left\{\def\arraystretch{1.2}%
\begin{array}{@{}l@{\quad}l@{}}
1 & \mbox{if $s_2\equiv 1,7 \Mod{8}$,}\\
-1 & \mbox{if $s_2 \equiv 3,5 \Mod{8}$},
\end{array}\right.
& \mbox{if $\nu_2(s)\equiv 1  \Mod{2}$}.
\end{array}\right.
\end{align*}

\subsection{$0\leq 2\nu_2(t)<\nu_2(s)$}

If $0\leq 2\nu_2(t)<\nu_2(s)$, then $\nu_2(t^2-s)=2\nu_2(t)$ and $\nu_2(s)=2\nu_2(t)+k$, for some $k\geq 1$; in particular,
\begin{align*}
(t^2-s)_2
&=(t^2-s)2^{-2\nu_2(t)}\\
&=t_2^2-s_2\cdot 2^k\\
&\equiv 
\left\{\def\arraystretch{1.2}%
\begin{array}{@{}l@{\quad}l@{}}
-1 \Mod{4} & \mbox{if $k=1$,}\\
1 \Mod{4} & \mbox{if $k\geq 2$}.
\end{array}\right.
\end{align*}
Therefore,
\begin{align*}
    \int_{\nu_2(s)-2\nu_2(t)=k}w_2^*(t)dt
    &=\int_{\nu_2(s)-2\nu_2(t)=k}w_2(t)dt
   \cdot \left\{\def\arraystretch{1.2}%
\begin{array}{@{}l@{\quad}l@{}}
-1 & \mbox{if $k=1$,}\\
1 & \mbox{if $k\geq 2$}.
\end{array}\right.
\end{align*}

Since $w_2(t)$ depends only on $\nu_2(t)$ and $t_2 \Mod{8}$ (and possibly on $\nu_2(s)$ and $s_2$), $w_2(t)$ is a $2$-uniformly locally constant multiplicative function with uniformity constant $\eta=3$; that is,
\begin{align*}
    \int_{\nu_2(s)-2\nu_2(t)=k}w_2(t)dt
    &=\frac{1}{2^{\frac{\nu_2(s)-k}{2}+3}}\sum_{d\in (\Z/8\Z)^*} w_2(d\cdot 2^\frac{\nu_2(s)-k}{2}),
\end{align*}
with
\begin{align*}
    &\int_{0\leq 2\nu_2(t)<\nu_2(s)}w_2^*(t)dt\\
    &=-\int_{\substack{{\nu_2(s)-2\nu_2(t)=1}\\{2|\nu_2(t)}}}w_2(t)dt
    +\int_{\substack{{\nu_2(s)-2\nu_2(t)=2}\\{2|\nu_2(t)}}}w_2(t)dt
    +\int_{\substack{{\nu_2(s)-2\nu_2(t)=3}\\{2|\nu_2(t)}}}w_2(t)dt\\
    &+\int_{\substack{{\nu_2(s)-2\nu_2(t)=4}\\{2|\nu_2(t)}}}w_2(t)dt
    +\int_{\substack{{\nu_2(s)-2\nu_2(t)=5}\\{2|\nu_2(t)}}}w_2(t)dt
    +\int_{\substack{{\nu_2(s)-2\nu_2(t)=6}\\{2|\nu_2(t)}}}w_2(t)dt
    +\int_{\substack{{\nu_2(s)-2\nu_2(t)\geq7}\\{2|\nu_2(t)}}}w_2(t)dt\\
   &-\int_{\substack{{\nu_2(s)-2\nu_2(t)=1}\\{2\nmid\nu_2(t)}}}w_2(t)dt
   +\int_{\substack{{\nu_2(s)-2\nu_2(t)=2}\\{2\nmid\nu_2(t)}}}w_2(t)dt
   +\int_{\substack{{\nu_2(s)-2\nu_2(t)=3}\\{2\nmid\nu_2(t)}}}w_2(t)dt
   +\int_{\substack{{\nu_2(s)-2\nu_2(t)\geq4}\\{2\nmid\nu_2(t)}}}w_2(t)dt,
\end{align*}
where we partitioned the integral according to the cases in Appendix A of \cite{BDD}. From Appendix A in \cite{BDD}, it is easy to see that 
\begin{align*}
   &\int_{\substack{{\nu_2(s)-2\nu_2(t)=1}\\{2|\nu_2(t)}}}w_2(t)dt,
   \int_{\substack{{\nu_2(s)-2\nu_2(t)=3}\\{2|\nu_2(t)}}}w_2(t)dt,
   \int_{\substack{{\nu_2(s)-2\nu_2(t)=6}\\{2|\nu_2(t)}}}w_2(t)dt,\\
   &\int_{\substack{{\nu_2(s)-2\nu_2(t)=1}\\{2\nmid\nu_2(t)}}}w_2(t)dt,
   \int_{\substack{{\nu_2(s)-2\nu_2(t)=2}\\{2\nmid\nu_2(t)}}}w_2(t)dt,
   \int_{\substack{{\nu_2(s)-2\nu_2(t)\geq4}\\{2\nmid\nu_2(t)}}}w_2(t)dt=0,
\end{align*}
whereas
\begin{align*}
 &\int_{\substack{{\nu_2(s)-2\nu_2(t)=2}\\{2|\nu_2(t)}}}w_2(t)dt
=\left\{\def\arraystretch{1.2}%
\begin{array}{@{}l@{\quad}l@{}}
\frac{1}{2^{\frac{\nu_2(s)}{2}+1}}
\cdot \left\{\def\arraystretch{1.2}%
\begin{array}{@{}l@{\quad}l@{}}
1 & \mbox{if $s_2\equiv 1 \Mod{4},$}\\
-2 & \mbox{if $s_2 \equiv 3 \Mod{4},$}
\end{array}\right.
& \mbox{if $\nu_2(s)\equiv 2 \Mod{4}$ and $\nu_2(s)\geq 2,$}\\
0 & \mbox{otherwise},
\end{array}\right.\\
&\int_{\substack{{\nu_2(s)-2\nu_2(t)=4}\\{2|\nu_2(t)}}}w_2(t)dt
=\left\{\def\arraystretch{1.2}%
\begin{array}{@{}l@{\quad}l@{}}
\frac{1}{2^{\frac{\nu_2(s)}{2}}} & \mbox{if $\nu_2(s)\equiv 0 \Mod{4}$ and $\nu_2(s)\geq 4,$}\\
0 & \mbox{otherwise},
\end{array}\right.\\
&\int_{\substack{{\nu_2(s)-2\nu_2(t)=5}\\{2|\nu_2(t)}}}w_2(t)dt
=\left\{\def\arraystretch{1.2}%
\begin{array}{@{}l@{\quad}l@{}}
\frac{1}{2^{\frac{\nu_2(s)-1}{2}}} & \mbox{if $\nu_2(s)\equiv 1 \Mod{4}$ and $\nu_2(s)\geq 5,$}\\
0 & \mbox{otherwise},
\end{array}\right.\\
&\int_{\substack{{\nu_2(s)-2\nu_2(t)\geq7}\\{2|\nu_2(t)}}}w_2(t)dt
=\sum_{\substack{{ 7\leq k \leq \nu_2(s)}\\{k\equiv \nu_2(s) \Mod{4}}}}\frac{-2}{2^{\frac{\nu_2(s)-k}{2}+3}}
=\left\{\def\arraystretch{1.2}%
\begin{array}{@{}l@{\quad}l@{}}
\frac{1}{3}\Biggr(\frac{2^{2\lceil \frac{7-j}{4}\rceil}}{2^{\frac{\nu_2(s)-j}{2}+2}}-1\Biggr) & \mbox{if $\nu_2(s)\geq 7,$}\\
0 & \mbox{otherwise,}
\end{array}\right.\\
&\int_{\substack{{\nu_2(s)-2\nu_2(t)=3}\\{2\nmid\nu_2(t)}}}w_2(t)dt
=\left\{\def\arraystretch{1.2}%
\begin{array}{@{}l@{\quad}l@{}}
\frac{-\chi_4(s_2)}{2^{\frac{\nu_2(s)-1}{2}}}
& \mbox{if $\nu_2(s)\equiv 1 \Mod{4}$ and $\nu_2(s)\geq 5$},\\
0 & \mbox{otherwise}.
\end{array}\right.
\end{align*}

Summing the individual contributions,
\begin{align*}
    &\int_{0\leq 2\nu_2(t)< \nu_2(s)}w_2^*(t)dt\\
    &=
\left\{\def\arraystretch{1.2}%
\begin{array}{@{}l@{\quad}l@{}}
0 & \mbox{if $\nu_2(s)=0,$}\\
0 & \mbox{if $\nu_2(s)=1,$}\\
\frac{1}{4}
\cdot \left\{\def\arraystretch{1.2}%
\begin{array}{@{}l@{\quad}l@{}}
1 & \mbox{if $s_2\equiv 1 \Mod{4},$}\\
-2 & \mbox{if $s_2 \equiv 3 \Mod{4},$}
\end{array}\right.
&\mbox{if $\nu_2(s)=2,$}\\
0 & \mbox{if $\nu_2(s)=3,$}\\
\frac{1}{4} & \mbox{if $\nu_2(s)=4$,}\\
\frac{1-\chi_4(s_2)}{4} & \mbox{if $\nu_2(s)=5,$}\\
\frac{1}{16}
\cdot \left\{\def\arraystretch{1.2}%
\begin{array}{@{}l@{\quad}l@{}}
1 & \mbox{if $s_2\equiv 1 \Mod{4},$}\\
-2 & \mbox{if $s_2 \equiv 3 \Mod{4},$}
\end{array}\right. & \mbox{if $\nu_2(s)=6,$}\\
\frac{1}{3}\Biggr(\frac{2^{2\lceil \frac{7-j}{4}\rceil}}{2^{\frac{\nu_2(s)-j}{2}+2}}-1\Biggr)
+\frac{1}{2^\frac{\nu_2(s)-j}{2}}
\left\{\def\arraystretch{1.2}%
\begin{array}{@{}l@{\quad}l@{}}
1 & \mbox{if $\nu_2(s)\equiv 0 \Mod{4},$}\\
1-\chi_4(s_2) & \mbox{if $\nu_2(s)\equiv 1 \Mod{4},$}\\
 \left\{\def\arraystretch{1.2}%
\begin{array}{@{}l@{\quad}l@{}}
\frac{1}{4} & \mbox{if $s_2\equiv 1 \Mod{4},$}\\
\frac{-1}{2}& \mbox{if $s_2 \equiv 3 \Mod{4},$}
\end{array}\right.
& \mbox{if $\nu_2(s)\equiv 2 \Mod{4}$,}\\
0 & \mbox{if $\nu_2(s)\equiv 3 \Mod{4}$,}
\end{array}\right.
& \mbox{if $\nu_2(s)\geq 7$,}
\end{array}\right.
\end{align*}
where $j\in \{0,1,2,3\}$ is such that $\nu_2(s)\equiv j \Mod{4}$ and where $\chi_4$ is the non-principal character modulo $4$.

\subsection{$0\leq 2\nu_2(t)=\nu_2(s)$}

To deal with the case where $0\leq 2\nu_2(t)=\nu_2(s)$, we first write
\begin{align*}
    \int_{0\leq 2\nu_2(t)=\nu_2(s)} w_2^*(t)dt =\sum_{k=0}^\infty \int_{\substack{{2\nu_2(t)=\nu_2(s)}\\{\nu_2(t^2-s)=\nu_2(s)+k}}}w_2^*(t)dt,
\end{align*}
with $w_2^*(t)\in \{\pm 1\}$ such that
\begin{align*}
    w_2^*(t)\equiv (t^2-s)_2w_2(t) \Mod{4},
\end{align*}
and where $w_2(t)$ is given by Appendix A in \cite{BDD}.

Since $w_2^*(t)$ depends only on $\nu_2(t)$, $t_2 \Mod{8}$, and $(t^2-s)_2 \Mod{8}$, we have that
\begin{align*}
    \int_{\substack{{2\nu_2(t)=\nu_2(s)}\\{\nu_2(t^2-s)=\nu_2(s)+k}}}w_2^*(t)dt=\frac{1}{2^{\frac{\nu_2(s)}{2}+k+3}}\sum_{\substack{{d\in (\Z/2^{k+3}\Z)^*}\\{d^2\equiv s_2 \Mod{2^k}}\\{d^2\not \equiv s_2 \Mod{2^{k+1}}}}}(d^2-s_2)_2^\prime w_2(d \cdot 2^\frac{\nu_2(s)}{2}),
\end{align*}
where the $\prime$ indicates that we take $(d^2-s_2)_2^\prime$ in $\{\pm 1\}$ such that $(d^2-s_2)_2\equiv (d^2-s_2)_2^\prime \Mod{4}$; hence,
\begin{align}
\label{s=2t}
    \int_{0\leq 2\nu_2(t)=\nu_2(s)} w_2^*(t)dt=\sum_{k=0}^\infty \frac{1}{2^{\frac{\nu_2(s)}{2}+k+3}}\sum_{\substack{{d\in (\Z/2^{k+3}\Z)^*}\\{d^2\equiv s_2 \Mod{2^k}}\\{d^2\not \equiv s_2 \Mod{2^{k+1}}}}}(d^2-s_2)_2^\prime w_2(d\cdot 2^\frac{\nu_2(s)}{2}).
\end{align}
From here, we consider various cases:

\begin{enumerate}
\item  $s_2 \equiv 3 \Mod{4}$: Let $d\in (\Z/2^{k+3}\Z)^*$ and suppose that $d^2\equiv s_2 \Mod{2^k}$, with $k\geq 2$. Then, $d^2\equiv s_2 \Mod{4}$. Under the assumption that $s_2\equiv 3 \Mod{4}$, we have that $d^2\equiv 3 \Mod{4}$, a contradiction, as all $d\in (\Z/2^{k+3}\Z)^*$ have squares equivalent to $1$ modulo $4$; and so, the sums 
\begin{align*}
\sum_{\substack{{d\in (\Z/2^{k+3}\Z)^*}\\{d^2\equiv s_2 \Mod{2^k}}\\{d^2\not \equiv s_2 \Mod{2^{k+1}}}}}(d^2-s_2)_2^\prime w_2(d\cdot 2^\frac{\nu_2(s)}{2})
\end{align*}
are empty for all $k\geq 2$. Similarly, there are no $d\in (\Z/8\Z)^*$ with $d^2\not \equiv 1 \Mod{2}$, so that the above sum is also empty for $k=0$. On the other hand, all $d\in (\Z/16\Z)^*$ are such that $d^2\equiv 1 \Mod{2},\not \equiv 3 \Mod{4}$; that is, 
\begin{align*}
    \sum_{\substack{{d\in (\Z/16\Z)^*}\\{d^2\equiv s_2 \Mod{2}}\\{d^2\not \equiv s_2 \Mod{4}}}}(d^2-s_2)_2^\prime w_2(d\cdot 2^\frac{\nu_2(s)}{2})
    =\sum_{d\in (\Z/16\Z)^*}(d^2-s_2)_2^\prime w_2(d\cdot 2^\frac{\nu_2(s)}{2}).
\end{align*}
It now follows that the only contribution to Equation \ref{s=2t}, when $s_2\equiv 3 \Mod{4}$, comes from $k=1$; in other words,
\begin{align*}
       \int_{0\leq 2\nu_2(t)=\nu_2(s)} w_2^*(t)dt
       = \frac{1}{2^{\frac{\nu_2(s)}{2}+4}}\sum_{d\in (\Z/16\Z)^*} (d^2-s_2)_2^\prime w_2(d\cdot 2^\frac{\nu_2(s)}{2}).
\end{align*}

By considering $(d^2-s_2)_2$ for $s_2\equiv 3,7,11,15 \Mod{16}$ and as $d$ varies over $(\Z/16\Z)^*$, we get that
\begin{align*}
    \int_{0\leq 2\nu_2(t)=\nu_2(s)} w_2^*(t)dt=
\left\{\def\arraystretch{1.2}%
\begin{array}{@{}l@{\quad}l@{}}
\left\{\def\arraystretch{1.2}%
\begin{array}{@{}l@{\quad}l@{}}
0 & \mbox{if $s_2\equiv 3 \Mod{8},$}\\
\frac{-1}{2^{\frac{\nu_2(s)}{2}+1}} & \mbox{if $s_2\equiv 7 \Mod{16},$}\\
\frac{1}{2^{\frac{\nu_2(s)}{2}+1}} & \mbox{if $s_2\equiv 15 \Mod{16},$}
\end{array}\right.
& \mbox{if $\nu_2(s)\equiv 0 \Mod{4}$,}\\
0 & \mbox{if $\nu_2(s)\equiv 2 \Mod{4}$}.
\end{array}\right.
\end{align*}

\item $s_2\equiv 5 \Mod{8}$: Similarly to the case above, let $d\in (\Z/2^{k+3}\Z)^*$ and suppose that $d^2\equiv s_2 \Mod{2^k}$, with $k\geq 3$. Then, $d^2\equiv s_2 \Mod{8}$. Under the assumption that $s_2\equiv 5 \Mod{8}$, we have that $d^2\equiv 5 \Mod{8}$, a contradiction, as all $d\in (\Z/2^{k+3}\Z)^*$ have squares equivalent to $1$ modulo $8$. So, the sums 
\begin{align*}
\sum_{\substack{{d\in (\Z/2^{k+3}\Z)^*}\\{d^2\equiv s_2 \Mod{2^k}}\\{d^2\not \equiv s_2 \Mod{2^{k+1}}}}}(d^2-s_2)_2^\prime w_2(d\cdot 2^\frac{\nu_2(s)}{2})
\end{align*}
are empty for all $k\geq 3$. Similarly, there are no $d\in (\Z/8\Z)^*$ (resp. $(\Z/16\Z)^*$) with $d^2\not \equiv 1 \Mod{2}$ (resp. $d^2\equiv 1\Mod{2},\not \equiv 1 \Mod{4}$), so that the above sums are also empty for $k=0,1$. On the other hand, all $d\in (\Z/32\Z)^*$ are such that $d^2\equiv 1 \Mod{4},\not \equiv 5 \Mod{8}$; that is, 
\begin{align*}
    \sum_{\substack{{d\in (\Z/32\Z)^*}\\{d^2\equiv s_2 \Mod{4}}\\{d^2\not \equiv s_2 \Mod{8}}}}(d^2-s_2)_2^\prime w_2(d\cdot 2^\frac{\nu_2(s)}{2})
    =\sum_{d\in (\Z/32\Z)^*}(d^2-s_2)_2^\prime w_2(d\cdot 2^\frac{\nu_2(s)}{2}).
\end{align*}
It now follows that the only contribution to Equation \ref{s=2t}, when $s_2\equiv 5 \Mod{8}$, comes from $k=2$; i.e.,
\begin{align*}
       \int_{0\leq 2\nu_2(t)=\nu_2(s)} w_2^*(t)dt
       = \frac{1}{2^{\frac{\nu_2(s)}{2}+5}}\sum_{d\in (\Z/32\Z)^*} (d^2-s_2)_2^\prime w_2(d\cdot 2^\frac{\nu_2(s)}{2}).
\end{align*}
By considering $(d^2-s_2)_2$ for $s_2\equiv 5,13 \Mod{16}$, $d\in (\Z/32\Z)^*$, it is also not hard to show that
\begin{align*}
\int_{0\leq 2\nu_2(t)=\nu_2(s)} w_2^*(t)dt=
\left\{\def\arraystretch{1.2}%
\begin{array}{@{}l@{\quad}l@{}}
0 & \mbox{if $\nu_2(s)\equiv 0 \Mod{4},$}\\
\left\{\def\arraystretch{1.2}%
\begin{array}{@{}l@{\quad}l@{}}
\frac{1}{2^{\frac{\nu_2(s)}{2}+2}} & \mbox{if $s_2\equiv 5 \Mod{16}$,}\\
\frac{1}{2^{\frac{\nu_2(s)}{2}+1}} & \mbox{if $s_2\equiv 13 \Mod{16}$,}\\
\end{array}\right.
& \mbox{if $\nu_2(s)\equiv 2 \Mod{4}$}.
\end{array}\right.
\end{align*}

\item $s_2\equiv 1 \Mod{8}$: In the case where $s_2\equiv 1 \Mod{8}$, we apply a less barbaric approach to computing $\int_{0\leq 2\nu_2(t)=\nu_2(s)}w_2^*(t)dt$. Firstly, notice that there are no $d\in (\Z/2^{k+3}\Z)^*$ such that $d^2\equiv 1 \Mod{2^k}\not \equiv 1 \Mod{2^{k+1}}$ for $k=0,1,2$; that is,
\begin{align*}
     \int_{0\leq 2\nu_2(t)=\nu_2(s)} w_2^*(t)dt=\sum_{k=3}^\infty \frac{1}{2^{\frac{\nu_2(s)}{2}+k+3}}\sum_{\substack{{d\in (\Z/2^{k+3}\Z)^*}\\{d^2\equiv s_2 \Mod{2^k}}\\{d^2\not \equiv s_2 \Mod{2^{k+1}}}}}(d^2-s_2)_2^\prime w_2(d\cdot 2^\frac{\nu_2(s)}{2}).
\end{align*}

Our next goal is to characterize all $d\in (\Z/2^{k+3}\Z)^*$ such that $d^2\equiv s_2 \Mod{2^k},\not \equiv s_2 \Mod{2^{k+1}}$, for $k\geq 3$. We begin by noting that all integers congruent to $1$ modulo $8$ admit a square root in $\Z_2$ (this follows from Hensel's Lemma). So, let $\pm \sqrt{s_2}$ denote the square roots of $s_2$ in $\Z_2$ and consider 
\begin{align}
\label{d}
d=d(\alpha_1,\alpha_2)&=\pm(\sqrt{s_2} +2^{k-1}(1+\alpha_1\cdot 2+ \alpha_2\cdot 2^2 +\alpha_3\cdot 2^3)) +2^{k+3}\Z_2\\
&\in (\Z_2/2^{k+3}\Z_2)^*\cong (\Z/2^{k+3}\Z)^*,
\end{align}
where $\alpha_i \in \{0,1\}, i=1,2,3$. Then, $d^2\equiv s_2 \Mod{2^k},\not \equiv s_2 \Mod{2^{k+1}}$. Moreover, 
\begin{align*}
    (d^2-s_2)_2
&\equiv 
\left\{\def\arraystretch{1.2}%
\begin{array}{@{}l@{\quad}l@{}}
2(1+2\alpha_1)+\sqrt{s_2}(1+2\alpha_1+4\alpha_2) \Mod{8} &\mbox{if $k=3,$}\\
4+\sqrt{s_2}(1+2\alpha_1+4\alpha_2) \Mod{8} & \mbox{if $k=4$,}\\
\sqrt{s_2}(1+2\alpha_1+4\alpha_2) \Mod{8} & \mbox{if $k\geq 5$}.
\end{array}\right.    
\end{align*}
\begin{rem}
The reason we label $d$ above as $d(\alpha_1,\alpha_2)$ will become apparent. Essentially, we only care for the values of $d,(d^2-s_2)_2$ modulo $8$, so that the value of $\alpha_3$ is irrelevant in our calculations: from Appendix A in \cite{BDD}, $w_2(t)$ is completely determined by $\nu_2(t)$ and $t_2,(t_2^2-s_2)_2\Mod{8}$.
\end{rem}
What's important to note is that the value of $(d^2-s_2)_2^\prime$ depends only on $\alpha_1$. Furthermore, the values of $(d^2-s_2)_2^\prime$ at $\alpha_1=0$ and $\alpha_1=1$ are negatives of one another! We claim further that Equation \ref{d} characterizes all $d\in(\Z/2^{k+3}\Z)^*$ such that $d^2\equiv s_2\Mod{2^k},\not\equiv s_2 \Mod{2^{k+1}}$: this follows from a simple counting argument. First note that there are exactly four $d\in (\Z/2^k \Z)^*$ such that $d^2\equiv s_2 \Mod{2^k}$, each of which lifts in exactly two ways to $d\in(\Z/2^{k+1}\Z)^*$ such that $d^2\equiv s_2 \Mod{2^k}$. Of these eight solutions, exactly four satisfy $d^2\equiv s_2 \Mod{2^{k+1}}$; that is, there are exactly four $d\in (\Z/2^{k+1}\Z)^*$ such that $d^2\equiv s_2\Mod{2^k},\not\equiv s_2\Mod{2^{k+1}}$, each of which lifts in exactly four ways to $d\in (\Z/2^{k+3}\Z)^*$ such that $d^2\equiv s_2\Mod{2^k},\not\equiv s_2\Mod{2^{k+1}}$.

By the preceding remarks, we may write
\begin{align*}
     \sum_{\substack{{d\in (\Z/2^{k+3}\Z)^*}\\{d^2\equiv s_2 \Mod{2^k}}\\{d^2\not \equiv s_2 \Mod{2^{k+1}}}}}(d^2-s_2)_2^\prime w_2(d\cdot 2^\frac{\nu_2(s)}{2})
\end{align*}
as
\begin{align*}     
     (2\chi_{k=3}(k)+\sqrt{s_2})^\prime
     \Biggr(
     &\Big(w_2(d(0,0)\cdot 2^\frac{\nu_2(s)}{2})+w_2(-d(0,0)\cdot 2^\frac{\nu_2(s)}{2})
     +w_2(d(0,1)\cdot 2^\frac{\nu_2(s)}{2})+w_2(-d(0,1)\cdot 2^\frac{\nu_2(s)}{2})\Big)\\
     -&\Big(w_2(d(1,0)\cdot 2^\frac{\nu_2(s)}{2})+w_2(-d(1,0)\cdot 2^\frac{\nu_2(s)}{2})
     +w_2(d(1,1)\cdot 2^\frac{\nu_2(s)}{2})+w_2(-d(1,1)\cdot 2^\frac{\nu_2(s)}{2})\Big)\Biggr).
\end{align*}

A case by case analysis then shows that, for $s_2\equiv 1 \Mod{8}$,
\begin{align*}
    \int_{0\leq 2\nu_2(t)=\nu_2(s)} w_2^*(t)dt
    =\left\{\def\arraystretch{1.2}%
\begin{array}{@{}l@{\quad}l@{}}
0 &\mbox{if $\nu_2(s)\equiv 0\Mod{4},$}\\
\frac{-1}{2^{\frac{\nu_2(s)}{2}+2}} & \mbox{if $\nu_2(s)\equiv 2 \Mod{4}.$}
\end{array}\right.  
\end{align*}

For the sake of completeness, we say a few more words. We deal with the case where $\nu_2(s) \equiv 0 \Mod{4}$, the case where $\nu_2(s)\equiv 2 \Mod{4}$ being eerily similar. Firstly, recall that $k\geq 3$. If $k\equiv 0,2,3,4 \Mod{6}, k\neq 2,3$, then $w_2(d\cdot 2^\frac{\nu_2(s)}{2})=1$ iff $d\equiv(d^2-s_2)_2 \Mod{4}$; in particular, $w_2(d\cdot 2^\frac{\nu_2(s)}{2})+w_2(-d\cdot 2^\frac{\nu_2(s)}{2})=0$ for all $d$. Therefore, the sums over $k\equiv 0,2,3,4 \Mod{6},k\neq 2,3,4$ are all equal to $0$. If $k\equiv 1,5 \Mod{6},k\neq 1,5$, then $w_2(d\cdot 2^\frac{\nu_2(s)}{2})=-1$ for all $d$; in this case,
\begin{align*}
&w_2(d(0,0)\cdot 2^\frac{\nu_2(s)}{2})+w_2(-d(0,0)\cdot 2^\frac{\nu_2(s)}{2})
     +w_2(d(0,1)\cdot 2^\frac{\nu_2(s)}{2})+w_2(-d(0,1)\cdot 2^\frac{\nu_2(s)}{2})\\
     &\;\;\;\;=w_2(d(1,0)\cdot 2^\frac{\nu_2(s)}{2})+w_2(-d(1,0)\cdot 2^\frac{\nu_2(s)}{2})
     +w_2(d(1,1)\cdot 2^\frac{\nu_2(s)}{2})+w_2(-d(1,1)\cdot 2^\frac{\nu_2(s)}{2}).
\end{align*}
Again, the sums over $k\equiv 1,5\Mod{6},k\neq 1,5$, are equal to $0$. For $k=3$, $w_2(d\cdot 2^\frac{\nu_2(s)}{2})=1$ iff $d\equiv 1 \Mod{4}$ and $d(d^2-s_2)_2\equiv 5,7 \Mod{8}$ or $d\equiv 3 \Mod{4}$ and $d(d^2-s_2)_2\equiv 3,5 \Mod{8}$. Since $d\equiv \pm \sqrt{s_2} \Mod{4}$ and since 
\begin{align*}
d(d^2-s_2)_2 
&\equiv \pm 
\left\{\def\arraystretch{1.2}%
\begin{array}{@{}l@{\quad}l@{}}
6\sqrt{s_2}+1 & \mbox{if $\alpha_1=0,\alpha_2=0$,}\\
6\sqrt{s_2}+5 & \mbox{if $\alpha_1=0,\alpha_2=1$,}\\
6\sqrt{s_2}+3 & \mbox{if $\alpha_1=1,\alpha_2=0$,}\\
6\sqrt{s_2}+7 & \mbox{if $\alpha_1=1,\alpha_2=1$,}
\end{array}\right.   
\end{align*}
it is easy to see that the sum at $k=3$ is also 0. Similarly, for the sum at $k=5$, $w_2(d\cdot 2^\frac{\nu_2(s)}{2})=1$ iff $d(d^2-s_2)_2\equiv 1,3,7 \Mod{8}$. In this case, 
\begin{align*}
d(d^2-s_2)_2 
&=\pm
 \left\{\def\arraystretch{1.2}%
\begin{array}{@{}l@{\quad}l@{}}
1 & \mbox{if $\alpha_1=0,\alpha_2=0$,}\\
5 & \mbox{if $\alpha_1=0,\alpha_2=1$,}\\
3 & \mbox{if $\alpha_1=1,\alpha_2=0$,}\\
7 & \mbox{ if $\alpha_1=1,\alpha_2=1$;}
\end{array}\right.
\end{align*}
in particular, the sum at $k=5$ is $0$.
\end{enumerate}

To summarize this subsection, 
\begin{align*}
\int_{0\leq 2\nu_2(t)=\nu_2(s)} w_2^*(t)dt
=
 \left\{\def\arraystretch{1.2}%
\begin{array}{@{}l@{\quad}l@{}}
0 & \mbox{if $\nu_2(s)\equiv 1 \Mod{2}$,}\\
 \left\{\def\arraystretch{1.2}%
\begin{array}{@{}l@{\quad}l@{}}
0 & \mbox{if $s_2\equiv 1,3,5 \Mod{8},$}\\
\frac{-1}{2^{\frac{\nu_2(s)}{2}+1}} & \mbox{if $s_2\equiv 7 \Mod{16},$}\\
\frac{1}{2^{\frac{\nu_2(s)}{2}+1}} & \mbox{if $s_2\equiv 15 \Mod{16},$}
\end{array}\right.
&\mbox{if $\nu_2(s)\equiv 0 \Mod{4},$}\\
 \left\{\def\arraystretch{1.2}%
\begin{array}{@{}l@{\quad}l@{}}
0 & \mbox{if $s_2\equiv 3 \Mod{4},$}\\
\frac{-1}{2^{\frac{\nu_2(s)}{2}+2}} & \mbox{if $s_2\equiv 1 \Mod{8},$}\\
\frac{1}{2^{\frac{\nu_2(s)}{2}+2}} & \mbox{if $s_2\equiv 5 \Mod{16},$}\\
\frac{1}{2^{\frac{\nu_2(s)}{2}+1}} & \mbox{if $s_2\equiv 13 \Mod{16},$}
\end{array}\right.
&\mbox{if $\nu_2(s)\equiv 2 \Mod{4}$}.
\end{array}\right.    
\end{align*}

Combining the results of the previous three subsections, 
\begin{prop}
\label{p=2}
\begin{align*}
&\int_{\Z_2}w_2^*(t)dt\\
&=\left\{\def\arraystretch{1.2}%
\begin{array}{@{}l@{\quad}l@{}}
0 & \mbox{if $\nu_2(s)\equiv 0 \Mod{2},$}\\
\frac{(-1)^\frac{\nu_2(s)-1}{2}}{2^\frac{\nu_2(s)+3}{2}}
\left\{\def\arraystretch{1.2}%
\begin{array}{@{}l@{\quad}l@{}}
1 & \mbox{if $s_2\equiv 1,7 \Mod{8},$}\\
-1 & \mbox{if $s_2 \equiv 3,5 \Mod{8},$}
\end{array}\right.
& \mbox{if $\nu_2(s)\equiv 1  \Mod{2},$}
\end{array}\right.\\
&+\left\{\def\arraystretch{1.2}%
\begin{array}{@{}l@{\quad}l@{}}
0 & \mbox{if $\nu_2(s)=0,$}\\
0 & \mbox{if $\nu_2(s)=1,$}\\
\frac{1}{4}
\cdot \left\{\def\arraystretch{1.2}%
\begin{array}{@{}l@{\quad}l@{}}
1 & \mbox{if $s_2\equiv 1 \Mod{4},$}\\
-2 & \mbox{if $s_2 \equiv 3 \Mod{4},$}
\end{array}\right.
&\mbox{if $\nu_2(s)=2,$}\\
0 & \mbox{if $\nu_2(s)=3,$}\\
\frac{1}{4} & \mbox{if $\nu_2(s)=4,$}\\
\frac{1-\chi_4(s_2)}{4} & \mbox{if $\nu_2(s)=5,$}\\
\frac{1}{16}
\cdot \left\{\def\arraystretch{1.2}%
\begin{array}{@{}l@{\quad}l@{}}
1 & \mbox{if $s_2\equiv 1 \Mod{4},$}\\
-2 & \mbox{if $s_2 \equiv 3 \Mod{4},$}
\end{array}\right. & \mbox{if $\nu_2(s)=6,$}\\
\frac{1}{3}\Biggr(\frac{2^{2\lceil \frac{7-j}{4}\rceil}}{2^{\frac{\nu_2(s)-j}{2}+2}}-1\Biggr)
+\frac{1}{2^\frac{\nu_2(s)-j}{2}}
\left\{\def\arraystretch{1.2}%
\begin{array}{@{}l@{\quad}l@{}}
1 & \mbox{if $\nu_2(s)\equiv 0 \Mod{4}$,}\\
1-\chi_4(s_2) & \mbox{if $\nu_2(s)\equiv 1 \Mod{4},$}\\
 \left\{\def\arraystretch{1.2}%
\begin{array}{@{}l@{\quad}l@{}}
\frac{1}{4} & \mbox{if $s_2\equiv 1 \Mod{4},$}\\
\frac{-1}{2}& \mbox{if $s_2 \equiv 3 \Mod{4},$}
\end{array}\right.
& \mbox{if $\nu_2(s)\equiv 2 \Mod{4},$}\\
0 & \mbox{if $\nu_2(s)\equiv 3 \Mod{4},$}
\end{array}\right.
& \mbox{if $\nu_2(s)\geq 7$,}
\end{array}\right.\\
&+\left\{\def\arraystretch{1.2}%
\begin{array}{@{}l@{\quad}l@{}}
0 & \mbox{if $\nu_2(s)\equiv 1 \Mod{2},$}\\
 \left\{\def\arraystretch{1.2}%
\begin{array}{@{}l@{\quad}l@{}}
0 & \mbox{if $s_2\equiv 1,3,5 \Mod{8},$}\\
\frac{-1}{2^{\frac{\nu_2(s)}{2}+1}} & \mbox{if $s_2\equiv 7 \Mod{16},$}\\
\frac{1}{2^{\frac{\nu_2(s)}{2}+1}} & \mbox{if $s_2\equiv 15 \Mod{16},$}
\end{array}\right.
&\mbox{if $\nu_2(s)\equiv 0 \Mod{4}$,}\\
 \left\{\def\arraystretch{1.2}%
\begin{array}{@{}l@{\quad}l@{}}
0 & \mbox{if $s_2\equiv 3\Mod{4},$}\\
\frac{-1}{2^{\frac{\nu_2(s)}{2}+2}} & \mbox{if $s_2\equiv 1 \Mod{8},$}\\
\frac{1}{2^{\frac{\nu_2(s)}{2}+2}} & \mbox{if $s_2\equiv 5 \Mod{16},$}\\
\frac{1}{2^{\frac{\nu_2(s)}{2}+1}} & \mbox{if $s_2\equiv 13 \Mod{16},$}
\end{array}\right.
&\mbox{if $\nu_2(s)\equiv 2 \Mod{4}$},
\end{array}\right.    
\end{align*}
where $j\in \{0,1,2,3\}$ is such that $\nu_2(s)\equiv j \Mod{4}$ and where $\chi_4$ is the non-principal character modulo $4$.
\end{prop}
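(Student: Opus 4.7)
The plan is to decompose $\Z_2$ into the three disjoint regions defined by the relative $2$-adic valuations of $t$ and $s$---namely $\{t : 0\leq \nu_2(s) < 2\nu_2(t)\}$, $\{t : 0 \leq 2\nu_2(t) < \nu_2(s)\}$, and $\{t : 0 \leq 2\nu_2(t) = \nu_2(s)\}$---and to compute the restriction of $\int_{\Z_2} w_2^*(t)\, dt$ to each region separately. The three contributions, once summed, yield the three-term expression in Proposition~\ref{p=2}. This is the $2$-adic analog of the strategy used to prove Propositions~\ref{p>5} and~\ref{p=3}.

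The central simplification is that $w_2^*(t) \equiv (t^2-s)_2\, w_2(t) \Mod{4}$ and $w_2(t)$ depends (from Appendix~A of \cite{BDD}) only on $\nu_2(t)$, $t_2 \Mod{8}$, and on auxiliary data of $s$. Hence on each region $w_2^*(t)$ is a $2$-uniformly locally constant multiplicative function, and each integral collapses to a finite weighted sum over residue classes in $(\Z/2^\eta\Z)^*$ for an appropriate uniformity constant $\eta$. On the first two regions $(t^2-s)_2 \Mod{4}$ is determined by $\nu_2(s)-2\nu_2(t)$ (it equals $\pm s_2$ or $\pm 1$ according as this difference is $\pm 1$ or has absolute value at least $2$); partitioning further by the parity of $\nu_2(t)$ and by the Kodaira-type cases of Appendix~A of \cite{BDD} reduces the first two integrals to finite sums plus a geometric tail that can be summed in closed form.

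The hard part will be the third region $2\nu_2(t)=\nu_2(s)$, where we stratify by $k := \nu_2(t^2-s) - \nu_2(s) \geq 0$. When $s_2 \equiv 3 \Mod{4}$ the congruence $d^2 \equiv s_2 \Mod{2^k}$ is unsolvable for $k\geq 2$, and when $s_2\equiv 5 \Mod{8}$ it is unsolvable for $k \geq 3$, so only finitely many $k$ contribute and the answer is obtained by direct enumeration of $(\Z/16\Z)^*$ or $(\Z/32\Z)^*$. When $s_2 \equiv 1 \Mod{8}$, however, Hensel's lemma gives $\sqrt{s_2}\in\Z_2$, and the solutions of $d^2\equiv s_2\Mod{2^k}$, $\not\equiv s_2\Mod{2^{k+1}}$ in $(\Z/2^{k+3}\Z)^*$ for each $k \geq 3$ can be explicitly parameterized as $\pm\bigl(\sqrt{s_2} + 2^{k-1}(1 + 2\alpha_1 + 4\alpha_2 + 8\alpha_3)\bigr)$ with $\alpha_i \in \{0,1\}$. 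The key observation driving the calculation is that $(d^2-s_2)_2^\prime$ depends only on $\alpha_1$ and is negated when $\alpha_1$ flips, so the summands pair up and cancel except in a handful of small $k$ or special $k \Mod{6}$ cases; a finite case analysis shows these residual cases also contribute zero. Summing the three regional contributions then gives Proposition~\ref{p=2}.
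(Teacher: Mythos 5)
Your overall strategy coincides with the paper's: split $\Z_2$ into the three regions according to the sign of $\nu_2(s)-2\nu_2(t)$, use the congruence $w_2^*(t)\equiv(t^2-s)_2\,w_2(t)\Mod{4}$ together with the $2$-uniform local constancy of $w_2$ to collapse each piece to finite sums over $(\Z/2^\eta\Z)^*$, and on the diagonal region stratify by $k=\nu_2(t^2-s)-\nu_2(s)$, disposing of $s_2\equiv3\Mod{4}$ and $s_2\equiv5\Mod{8}$ via the unsolvability of $d^2\equiv s_2\Mod{2^k}$ for $k\geq2$ (resp.\ $k\geq3$) and treating $s_2\equiv1\Mod{8}$ via the Hensel parameterization $d=\pm\bigl(\sqrt{s_2}+2^{k-1}(1+2\alpha_1+4\alpha_2+8\alpha_3)\bigr)$. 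Up to that point your plan reproduces the paper's proof.

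There is, however, a concrete error in your endgame for the diagonal region when $s_2\equiv1\Mod{8}$: you assert that after the $\alpha_1$-pairing the residual small-$k$ and special $k\Mod{6}$ cases ``also contribute zero,'' so that the whole diagonal contribution vanishes. That is correct when $\nu_2(s)\equiv0\Mod{4}$, but it fails when $\nu_2(s)\equiv2\Mod{4}$: the values of $w_2(d\cdot2^{\nu_2(s)/2})$ read off from Appendix A of \cite{BDD} are different in that case, the cancellation breaks for one residue class of $k$, and the surviving terms sum to $\frac{-1}{2^{\nu_2(s)/2+2}}$ --- which is exactly the nonzero entry for $s_2\equiv1\Mod{8}$, $\nu_2(s)\equiv2\Mod{4}$ in the third brace of the Proposition. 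If your blanket cancellation claim were true, the formula you would derive would contradict the very statement you are proving. The point is that the $\alpha_1$-pairing only negates $(d^2-s_2)_2^\prime$; it forces cancellation only when $w_2$ takes equal values on the paired terms (or when $w_2(d)+w_2(-d)=0$ identically), and whether that happens depends on $\nu_2(s)\Mod{4}$. You must therefore run the residual case analysis separately for $\nu_2(s)\equiv0$ and $\nu_2(s)\equiv2\Mod{4}$ and carry the nonzero outcome of the latter into the final sum.
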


\section{Acknowledgements}
The author would like to thank Chantal David, Hershy Kisilevsky, and Patrick Meisner, for their thoughts and comments. The author would also like to extend his gratitude to Christophe Delaunay, for verifying the computations contained herein with the use of the PARI/GP software \cite{Par}.

\bibliographystyle{alpha}
\bibliography{main}

\end{document}